\documentclass[a4paper,11pt,twoside,reqno]{amsart}

\usepackage[utf8]{inputenc}
\usepackage[plainpages=false,pdfpagelabels=true]{hyperref}
\usepackage{amssymb,amsthm}
\usepackage[margin=1in]{geometry}
\usepackage{slashed}

\newtheorem{Satz}{Theorem}[section]
\newtheorem{Prop}[Satz]{Proposition}

\theoremstyle{definition}
\newtheorem{Dfn}[Satz]{Definition}
\newtheorem{Bem}[Satz]{Remark}

\newcommand{\tr}{\operatorname{Tr}}
\newcommand{\Ric}{\operatorname{Ric}}
\newcommand{\Ein}{\operatorname{Ein}}

\newcommand{\Scal}{\operatorname{Scal}}

\newcommand{\grad}{\operatorname{grad}}

\newcommand{\sff}{\mathrm{I\!I}}

\newcommand{\dv}{\,\mathrm{d}v}   

\newcommand{\R}{\mathbb{R}}
\allowdisplaybreaks[1]

\renewcommand{\epsilon}{\varepsilon}
\numberwithin{equation}{section}

\usepackage{pifont}
\usepackage{booktabs}
\usepackage{empheq}
\usepackage{tensor}
\usepackage[inline]{enumitem}
\usepackage{mathrsfs}
\setenumerate{font=\upshape,label=(\roman*),itemsep=0.1cm}

\newcommand{\diff}{\mathrm{d}}
\newcommand{\Y}{\mathrm{Y}}
\newcommand{\id}{\mathrm{Id}}
\newcommand{\iprod}{\mathbin{\lrcorner}}
\newcommand{\Fer}{\mathscr{F}} 
\newcommand{\Hig}{\mathscr{H}} 
\newcommand{\dirac}{\slashed{\mathrm{D}}{}}
\newcommand{\g}{\mathfrak{g}}

\newcommand{\comment}[1]{}

\newcommand{\ferdirac}[1]{\dirac_{#1}}

\renewcommand{\Re}{\mathrm{Re}}
\renewcommand{\Im}{\mathrm{Im}}

\DeclareMathOperator{\SO}{\mathbf{SO}}

\DeclareMathOperator{\Ad}{Ad}

\newcommand{\normal}{\mathbf{n}}

\usepackage[dvipsnames]{xcolor}

\title[The energy-momentum tensor of the Standard Model]{The energy-momentum tensor of the Standard Model with applications to energy conditions}

\date{\today}

\author{Volker Branding}
\address{University of Rostock, Institute of Mathematics\\
Ulmenstraße 69, 18057 Rostock, Germany\\}
\email{volker.branding@uni-rostock.de}

\author{Adam Lindstr\"om}
\address{University of Vienna, Faculty of Mathematics\\
Oskar-Morgenstern-Platz 1, 1090 Vienna, Austria\\}
\email{adam.lindstroem@univie.ac.at}

\author{Marko Sobak}
\address{University of Vienna, Faculty of Mathematics\\
Oskar-Morgenstern-Platz 1, 1090 Vienna, Austria\\}
\email{marko.sobak@univie.ac.at}

\keywords{Standard Model; energy-momentum tensor; energy conditions}
\thanks{The authors gratefully acknowledge the support of the Austrian Science Fund (FWF) through the projects "Geometric Analysis of Biwave Maps" (DOI: 10.55776/P34853) and 
"The Standard Model as a Geometric Variational Problem" (DOI: 10.55776/P36862).
The second and third named authors also thank the University of Rostock for the hospitality during their research visits where parts of this article were written.
}
\subjclass[2020]{58E15, 53C27, 53C50, 81T13}

\begin{document}
\begin{abstract}
The Standard Model of elementary particle physics is one of the most successful models of contemporary physics,
its predictions being in full agreement with experiments. In this manuscript we consider the Lagrangian of the Standard Model as a geometric variational problem on a globally hyperbolic manifold and derive the associated energy-momentum tensor in a geometric invariant way.  
As an application, we investigate the validity of various energy conditions that arise in general relativity.
\end{abstract} 

\maketitle

\section{Introduction}
The Standard Model of elementary particle physics is one of the greatest successes in modern theoretical physics.
Its mathematical structure employs gauge theory on globally hyperbolic manifolds.
More precisely, we use the language of principal fiber bundles to model the particle content of the theory, 
and to incorporate the dynamics of the system we put the governing equations on a globally hyperbolic manifold as this is the natural setup for wave equations.

The central object of the Standard Model is a Lagrangian whose precise structure is fixed by demanding a number of invariances demanded by physics.
In order to provide the mathematical background of the Standard Model Lagrangian, it is convenient
to consider the three sectors contributing to its Lagrangian
\begin{align}
\label{eq:lagrangian-intro}
L^{\mathrm{SM}}=L^{\mathrm{YM}}+L^{\mathrm{Higgs}}+L^{\mathrm{Dirac}},    
\end{align}
which are the Yang-Mills, Higgs, and Dirac(-Yukawa) sector, respectively.

The relevant mathematical background for defining the Yang-Mills sector is covered in
\cite{MR1079726}, see also \cite{BAUM}.
For spin geometry in the Riemannian setting we refer to \cite{MR1031992}, for spin geometry on 
Lorentzian manifolds one may consult \cite{MR701244}.
A modern mathematical introduction to the Standard Model is provided by the recent book of Hamilton \cite{MR3837560}. 
A study of the energy functional of the Standard Model in the case of a four-dimensional Riemannian domain is carried out in the article of Parker \cite{MR677998}. For the necessary background on globally hyperbolic manifolds we refer to
the books \cite{MR424186}, \cite{MR2527641}. 

Currently, there are only few mathematical references available that provide existence results
for the critical points of the complete Standard Model Lagrangian \eqref{eq:lagrangian-intro}.
In this regard we want to mention the seminal result of Choquet-Bruhat and Christodoulou \cite{MR654209}
where an existence result on \((3+1)\)-dimensional Minkowski space assuming small initial data
using conformal methods was established. This result was extended recently by the first and the third author in \cite{bs25} assuming that the Standard Model is considered on expanding space-times and small initial data. However, both results require to employ a conformal potential for the Higgs field instead of the famous Mexican hat potential from physics.

Given a model in physics that is defined by a Lagrangian as \eqref{eq:lagrangian-intro}
one can ask how this model behaves when the metric on the underlying manifold changes.
This naturally leads to the notion of the \emph{energy-momentum tensor}, often also called the stress-energy-tensor, which arises from the variation of the Lagrangian of a theory
with respect to the metric of the manifold it is defined on. While the energy-momentum tensor is an object of individual mathematical interest, it is of great importance in particular when coupling the theory to the Einstein equations
of general relativity.

Let us a give a non-exhaustive list of results on the energy-momentum tensor for parts of the Standard Model which are connected to the content of this manuscript.
\begin{enumerate}
    \item The role of the energy-momentum tensor in classical gauge theories in physics is highlighted in \cite{MR3567581} and \cite{MR2029901}, see also the references cited therein.
    \item The energy-momentum tensor for the Dirac sector was studied in the Riemannian case in \cite{MR1158762} which was then extended to arbitrary signatures in \cite{MR2121740},
    see also \cite{MR4034775} for a closely related problem.
    \item Concerning the energy-momentum tensor for the Yang-Mills action we want to mention the articles \cite{MR2453667}, \cite{MR2795324}, \cite{MR654088} which all focus on the case of a Riemannian domain.
    \item In their seminal work on the existence of the Yang-Mills-Higgs system on four-dimensional Minkowski space Eardley and Moncrief employed the associated energy-momentum tensor to derive energy estimates, see \cite{MR649158},\cite{MR649159}.
    \item In \cite{MR4038549} an existence result for the Yang-Mills-Higgs system on four-dimensional Minkowski space for large initial data is achieved, where the energy-momentum tensor plays a centrole role in setting up the necessary energy estimates.
\end{enumerate}

One of the goals of this article is to understand if the energy-momentum tensor derived from the Lagrangian of the Standard Model satisfies or violates certain energy conditions that are of great importance in mathematical general relativity. Through the singularity theorems of Hawking and Penrose one expects that a violation of one these energy conditions leads to the formation of a space-time singularity so that it is important to understand the precise structure of the energy-momentum tensor in this regard.

For an overview on different energy conditions as they appear in quantum field theory and general relativity we refer to the reviews \cite{MR4190325} and \cite{MR4092298}.

\par\medskip
This article is organized as follows:
In Section 2, we fix necessary notation required and briefly recall the geometric structure of the Standard Model Lagrangian.
Section 3 then carefully derives the energy-momentum tensor by varying the corresponding energy functional with respect to the metric both for the standard Higgs potential as well as its conformal version.
In addition, we show by an explicit calculation that the energy-momentum tensor is conserved at critical points 
and discuss the influence of the Higgs potential on its properties.
We then apply these results in Section 4 to investigate if the energy-momentum tensor
of the Standard Model satisfies or violates various energy conditions which come up in general relativity.

\section{Geometric structure of the Standard Model}
\label{sec-standard-model}

Throughout this manuscript, we assume $(M,g)$ to be an oriented and time-oriented $m$-dimensional spin Lorentzian manifold of signature $(-+\cdots+)$. We use the same conventions as in \cite{bs25} for forms, inner products, curvature, signs of operators, etc., so we do not restate them here. However, we briefly recall the structure of the Standard Model Lagrangian.

\subsection{Bosons (Yang-Mills-Higgs sector)}

\textit{Bosons} are the so-called \textit{force-carrier particles}, as they mediate forces between all other particles.
These forces include the electromagnetic force, the weak interaction, and the strong interaction. 

Let $G$ be a compact Lie group equipped with an Ad-invariant positive-definite inner product on its Lie algebra $\g$.
We would like to note that for the Standard Model, the structure group is explicitly (see \cite{MR3837560})
\begin{equation*}
    G = (\mathbf{SU}(3) \times \mathbf{SU}(2) \times \mathbf{U}(1))/\mathbb{Z}_6,
\end{equation*}
but the results of this paper apply for any compact Lie group as above.

Let $G \to P \xrightarrow\pi M$ be a principal bundle equipped with a connection $\omega \in \Omega^1(P,\g)$, and let $F_\omega \in \Omega^2(M, \Ad P)$ be its curvature form.
The connection $\omega$ encodes information about \emph{bosons}.
Let also $W$ be a complex linear space equipped with a Hermitian inner product and let $\rho : G \to \mathbf{U}(W)$ be a complex representation. We consider the associated bundle $\Hig = P \times_\rho W$ whose sections are called \textit{Higgs fields}.
The connection $\omega$ induces a covariant derivative on $\Hig$ that is denoted by $\nabla_{\omega}$.
Since the representation $\rho$ is unitary, the inner product on $W$ also induces a metric on the Higgs field bundle $\Hig$.
The \textit{Yang-Mills-Higgs (bosonic) Lagrangian density} is given by 
\begin{align*}
    (\omega, \Phi) \mapsto &-\left(|F_\omega|^2 + |\nabla_{\omega} \Phi|^2 + U(\Phi)\right)
    \\
    =& -\frac12 \langle (F_\omega)_{\mu\nu}, (F_\omega)^{\mu\nu}\rangle - \langle (\nabla_\omega\Phi)_\mu, (\nabla_\omega\Phi)^\mu \rangle - U(\Phi),
\end{align*}
where $U: \Hig \to \R$ is a smooth $G$-invariant function.

\begin{Bem}
The Standard Model utilizes the so-called \textit{Mexican hat potential}, given by
\begin{equation*}
     U_{\text{mex}}(\Phi) = - \mu |\Phi|^2 + \frac{\lambda}{2} |\Phi|^4
\end{equation*}
where $\mu,\lambda \geq 0$ are constants.
However, in principle, the potential $U$ could also have additional dependences on the spacetime.
Here, it is most reasonable to have this dependence driven by the spacetime metric $g$, so that it is natural to also consider potentials $U(g, \Phi)$.
In this context a more explicit example is the \textit{conformal potential}
\begin{equation}\label{eq-conformal-potential}
    U_{\text{conf}}(g, \Phi) = \frac{m-2}{4(m-1)} \Scal_g |\Phi|^2 + \frac{\lambda}{2} |\Phi|^4,
\end{equation}
where $\Scal_g$ is the scalar curvature of $(M,g)$ and $\lambda\geq 0$ is a non-negative function on $M$.
This potential has the benefit that it renders the Euler-Lagrange equations for the matter fields conformally invariant.

In the last section of this article we will be investigating the validity of various energy condition arising in general relativity.
One of these conditions, i.e. the weak energy condition, demands the potential $U$ to be non-negative. 
This can be achieved by considering the following version of the Mexican hat potential 
\begin{equation}\label{eq-mexican-hat-potential}
    U_{\text{mex}}(\Phi) = \frac{\lambda}{2}\left(|\Phi|^2 - \frac{\mu}{\lambda} \right)^2.
\end{equation}
On the other hand, the conformal potential (\ref{eq-conformal-potential}) is already non-negative if $\Scal_g \geq 0$ and otherwise one should replace it by
\begin{equation*}
    (g, \Phi) \mapsto \frac{\lambda}{2} \left(|\Phi|^2 + \frac{m-2}{4(m-1)\lambda}\Scal_g\right)^2.
\end{equation*}
Note that these modifications do not affect the Euler-Lagrange equation for the matter fields, but they do affect the energy-momentum tensor and hence the Einstein field equations. 
In particular, in the case of the conformal potential, one also needs to compute the variation of $\Scal^2_g$. For simplicity, we will only consider the simpler (\ref{eq-conformal-potential}) in this context.
\end{Bem}

\subsection{Fermions (Dirac-Yukawa sector)}

\textit{Fermions} are the so-called \textit{matter particles}.
They can further be subdivided into \textit{quarks} and \textit{leptons}.
Mathematically, elementary fermionic particles are modelled as twisted chiral spinors. 
Note that the notion of chirality only makes sense in even dimensions, which is of course the case for the Standard Model as it is physically built on a four-dimensional spacetime. However, one can formulate the Dirac sector on spacetimes of arbitrary dimension, while only the Yukawa (mass) sector requires the chiral structure. We will elaborate on this below.

Let $V$ be a complex linear space equipped with a Hermitian inner product.
Let $\chi : G \to \mathbf{U}(V)$ be a complex representation and define the associated bundle $\mathscr{S} = P \times_\chi V$. We consider the twisted spinor bundle 
\begin{equation*}
\Fer = \Sigma M \otimes \mathscr{S}.
\end{equation*}
The connection $\omega$ induces a covariant derivative on $\mathscr{S}$, which together with the spinor covariant derivative induces a \textit{twisted spinor covariant derivative} which will be denoted by $\nabla_\omega$.
The \textit{twisted Dirac operator} is the map
\begin{equation*}
    \dirac_{\omega} : \Gamma(\Fer) \to \Gamma(\Fer), \qquad 
    \dirac_{\omega}\Psi = \eta^{\mu\nu} e_\mu \cdot (\nabla_{\omega} \Psi)(e_\nu). 
\end{equation*}
Here, Clifford multiplication acts only on the spinorial factor,
and we use the sign convention
\begin{align}
   \label{eq-clifford-rel}
X\cdot Y\cdot\psi+Y\cdot X\cdot\psi=-2g(X,Y)\psi.
\end{align}
Note that \(i\dirac_\omega\) is self-adjoint with respect to the \(L^2\)-norm, since Clifford multiplication is symmetric in our convention, i.e. 
\begin{align}
\label{eq-clifford-mult}
\langle X\cdot\psi,\xi\rangle=\langle\psi,X\cdot\xi\rangle
\end{align}
for all \(\psi,\xi\in\Gamma(\Fer)\) and \(X\in TM\).
The twisted Dirac operator \(\dirac_{\omega}\) satisfies the
Weitzenböck formula
\begin{align}
\label{eq:weitzenboeck-dirac}
\dirac_{\omega}^2\Psi= -\Box_\omega\Psi + \frac{1}{4}\Scal_g \Psi + \chi_*(F_\omega)\cdot \Psi   
\end{align}
for all $\Psi \in \Gamma(\Fer)$.

Now, assuming that $M$ is even-dimensional, the spinor bundle splits into chiral subbundles $\Sigma M = \Sigma_+ M \oplus \Sigma_- M$.
In this case, we assume that the representation $\chi$ splits such that $V = V_+ \oplus V_-$ is an orthogonal decomposition and $\chi = \chi_+ \oplus \chi_-$ where $\chi_\pm : G \to \mathbf{U}(V_\pm)$ are representations (in general non-isomorphic).
We also define the associated bundles $\mathscr{S}_\pm = P \times_{\chi_\pm} V_\pm$, as well as the \textit{twisted chiral/mixed spinor bundles}, respectively, as 
\begin{equation*}
    \Fer_+ = (\Sigma_+M \otimes \mathscr{S}_+) \oplus (\Sigma_-M \otimes \mathscr{S}_-), \qquad \Fer_- = (\Sigma_+M \otimes \mathscr{S}_-) \oplus (\Sigma_-M \otimes \mathscr{S}_+).
\end{equation*}
Sections $\Psi \in \Gamma(\Fer_+)$ are called \textit{twisted chiral spinors}.
The twisted Dirac operator restricts to a mapping
\begin{equation*}
    \dirac_{\omega} : \Gamma(\Fer_\pm) \to \Gamma(\Fer_\mp).
\end{equation*}

Now note that $\Fer = \Fer_+ \oplus \Fer_-$ is a null decomposition, since $\Sigma_\pm M$ are null and $\mathscr{S}_\pm$ are orthogonal.
In particular, we cannot supplement the energy density with a mass term of the form $m^2 |\Psi|^2$ as such a term is identically zero for $\Psi \in \Gamma(\Fer_+)$. 
The solution to this problem is again provided by the Higgs field, via the so-called Yukawa coupling, which is a mapping taking Higgs fields to couplings between $\Fer_\pm$, i.e.\
\begin{equation*}
    \Gamma(\Hig) \ni \Phi \mapsto \Y_\Phi : \Gamma(\Fer_\pm) \to \Gamma(\Fer_\mp),
\end{equation*}
such that $i\Y_\Phi$ is self-adjoint (see \cite{bs25} for more details).
Note that these considerations apply for even-dimensional spacetimes of any dimension.

On the other hand, when the spacetime is odd-dimensional, there is no notion of chirality, and we instead consider the sections of the full space $\Fer$, together with a self-adjoint "mass" map $i\Y_\Phi$ taking sections of $\mathscr{F}$ to sections of $\mathscr{F}$. E.g.\ one can simply define $i\Y_\Phi\Psi = m\Psi$ for $m \in \R$.

Thus, for fixed $\omega$ and $\Phi$, the total Lagrangian density for fermions is given by
\begin{equation*}
    \Psi \mapsto  -\Re \langle \Psi, i(\dirac_{\omega} + \Y_\Phi) \Psi \rangle.
\end{equation*}

\subsection{Lagrangian and Euler-Lagrange equations}
The total Lagrangian of the Standard Model is thus given by
\begin{equation}\label{eq-sm-lagrangian}
    L^{\mathrm{SM}}: (\omega,\Phi,\Psi) \mapsto 
    -\left(\vert F_\omega \vert^2  + \vert \nabla_{\omega} \Phi \vert^2 + U(g, \Phi) + 
    \Re\langle \Psi, i(\dirac_{\omega} + \Y_\Phi) \Psi \rangle\right),
\end{equation}
where $\omega \in \Omega^1(P,\g)$ is a connection, $\Phi \in \Gamma(\Hig)$, $\Psi \in \Gamma(\Fer_+)$, and $U: M \times \Hig \to \R$ is a smooth $G$-invariant function.

The Standard Model (SM) field equations are obtained by varying the Lagrangian $L^{\mathrm{SM}}$ with respect to $(\omega, \Phi, \Psi)$.
The variation is well-known in the literature, we refer to \cite{bs25} for the details.
The resulting equations are called the \emph{SM equations} and are explicitly given by
\begin{subequations}
\begin{empheq}[left=\empheqlbrace]{align}
    \diff_\omega^\ast  F_\omega - \mathfrak{J}_1[\Phi] - \mathfrak{J}_2[\Psi] &= 0 , \label{eq-yang-mills}\\[0.2cm]
    \Box_{\omega} \Phi - \tfrac12 \grad U_\Phi - \mathfrak{J}_3[\Psi] &= 0 , \label{eq-higgs}\\[0.2cm]
    \dirac_{\omega} \Psi + \Y_\Phi\Psi &= 0, \label{eq-dirac}
\end{empheq}
\end{subequations}
where $\omega \in \Omega^1(P,\g)$ is a connection, $\Phi \in \Gamma(\Hig)$, $\Psi \in \Gamma(\Fer_+)$, and $U: \Hig \to \R$ is a smooth $G$-invariant function. Moreover, the currents are given by
\begin{align*}
    \mathfrak{J}_1[\Phi] &= -\Re \langle \nabla_{\omega}\Phi \otimes \rho_*\Phi \rangle = -\Re \langle (\nabla_{\omega}\Phi)_\mu, \, \rho_*(\xi_a)\Phi \rangle \, e^\mu \otimes \xi_a,\\[0.1cm]
    \mathfrak{J}_2[\Psi] &= \tfrac12\Im \langle \id\cdot\Psi \otimes \chi_*\Psi \rangle = \tfrac12 \Im \langle e_\mu \cdot \Psi, \, \chi_*(\xi_a)\Psi \rangle\, e^\mu \otimes \xi_a,\\[0.1cm]
    \mathfrak{J}_3[\Psi] &= \langle \Psi, i\Y^-  \Psi \rangle = \tfrac{1}{2} \langle \Psi, (i\Y_{W_k} - \Y_{iW_k}) \Psi \rangle \, W_k.
\end{align*}
Here, $e_\mu$ is a semi-orthonormal frame for $\SO^+(TM)$, $\xi_a$ is an orthonormal frame for $\Ad P$, and $W_k$ is an orthonormal frame for $\Hig$.
Furthermore, $\grad U_\Phi$ is the bundle gradient of $U$ evaluated at $\Phi$, satisfying 
\begin{equation*}
    \Re \langle \alpha, \grad U_\Phi \rangle = \diff U_\Phi(\alpha),
\end{equation*}
for all $\alpha \in \Gamma(\Hig)$.
Note that $\mathfrak{J}_3$ can also be defined dually as
\begin{equation}\label{eq-yukawa-complex-antilinear}
    2\Re \langle \Phi, \langle \Psi, i\Y^-  \Psi \rangle \rangle = \langle \Psi, i\Y_\Phi  \Psi \rangle
\end{equation}
for all $\Phi \in \Gamma(\Hig)$ and $\Psi \in \Gamma(\Fer_+)$.

\section{The energy-momentum tensor of the Standard Model}

The \emph{energy-momentum tensor} $T$ of a given matter model is defined as the variation of the associated Lagrangian with respect to the metric $g$ of the underlying spacetime, cf.\ \eqref{eq-energy-momentum-def} below.
It thus appears naturally as the right-hand side of the Einstein field equations
\begin{equation*}
    \Ein_g = T,
\end{equation*}
where $\Ein_g = \Ric_g - \frac12 \Scal_g g$ is the Einstein tensor. 
In particular, the energy-momentum tensor describes how matter affects the gravitational field.
From a mathematical perspective, the energy-momentum tensor is also useful for deriving analytical estimates due to the fact that it is divergence-free, as we shall verify below.

\subsection{Derivation of the energy-momentum tensor}

The energy-momentum tensor of the Standard Model is defined as the real-valued symmetric tensor field $T^{\mathrm{SM}}$ satisfying
\begin{equation}\label{eq-energy-momentum-def}
    \frac{\diff}{\diff t}\bigg|_{t=0} \int_K L^{\mathrm{SM}}_{g(t)}(\omega,\Phi,\Psi) \dv_{g(t)} 
    = \int_K \tr_h(T^{\mathrm{SM}}) \dv_g 
    = \int_K h^{\mu\nu} T^{\mathrm{SM}}_{\mu\nu} \dv_g,
\end{equation}
where $K \subset M$ is compact and $g(t)$ is a variation of the metric 
\begin{equation}
\label{eq:variation-metric}
g(0) = g, \qquad \frac{\diff}{\diff t}\bigg|_{t=0} g(t) = h, 
\end{equation}
such that $h$ is supported in $K$.

The derivation of the energy-momentum tensor for the different sectors of the Standard Model in isolation is well-known in the literature, with the only exception being the Higgs sector with inhomogeneous potentials, such as the conformal potential as detailed in \eqref{eq-conformal-potential}.
As we are treating the full Standard Model within this manuscript, we nevertheless present a detailed derivation including all of its sectors here.

\begin{Satz}
\label{thm:energy-momentum-tensors}
The energy-momentum tensor associated with the Standard Model Lagrangian
\eqref{eq-sm-lagrangian}  with the Higgs potential $U$ independent of $g$ is given at a critical point by
\begin{align}
 \label{dfn:energy-momentum}
T^{\mathrm{SM}}=T^{\mathrm{YM}}+T^{\mathrm{Dirac}}+T^{\mathrm{Higgs}},
\end{align}
where
\begin{align*}
T^{\mathrm{YM}}(X,Y)
&= \langle X \iprod F_\omega, Y \iprod F_\omega \rangle - \frac12 |F_\omega|^2 g(X,Y),
\\[0.2cm]
T^{\mathrm{Dirac}}(X,Y) 
&= \frac14 \Re \langle \Psi, iX \cdot (\nabla_\omega\Psi)(Y) + iY \cdot (\nabla_\omega\Psi)(X) \rangle,
\\[0.2cm]
T^{\mathrm{Higgs}}(X,Y)
&= \Re \langle (\nabla_\omega \Phi)(X), (\nabla_\omega \Phi)(Y) \rangle - \frac12 \left(|\nabla_\omega\Phi|^2 + U(\Phi)\right) g(X,Y),
\end{align*}
for vector fields $X,Y \in \Gamma(TM)$.
On the other hand, the energy-momentum tensor associated with the conformal Standard Model Lagrangian, i.e.\ with $U = U_{\mathrm{conf}}$ as defined in \eqref{eq-conformal-potential}, is given by 
\begin{align}
 \label{dfn:energy-momentum-conf}
T^{\mathrm{c-SM}}=T^{\mathrm{YM}}+T^{\mathrm{Dirac}}+T^{\mathrm{c-Higgs}},
\end{align}
where $T^{\mathrm{YM}}$ and $T^{\mathrm{Dirac}}$ are as above, while
\begin{align*}
    &T^{\mathrm{c-Higgs}}(X,Y)\\
    =&\, \frac{1}{2(m-1)} \bigg[ (2-m)\Re \langle \Phi, (\nabla_\omega^2\Phi)(X,Y) \rangle + m\Re \langle (\nabla_\omega \Phi)(X), (\nabla_\omega \Phi)(Y) \rangle + \frac{m-2}{2} |\Phi|^2 \Ric_g(X,Y)  \\
    & \qquad\quad  +\left(- |\nabla_\omega\Phi|^2  - \frac{m-2}{4(m-1)} \Scal_g |\Phi|^2  + \frac{m-3}{2} \lambda |\Phi|^4  + \frac{m-2}{2} \langle \Psi, i\Y_\Phi \Psi \rangle  \right)g(X,Y) \bigg].
\end{align*}
\end{Satz}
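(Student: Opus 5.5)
We compute the metric variation sector by sector, using $\frac{\diff}{\diff t}|_{t=0}\dv_{g(t)} = \frac12 \tr_g(h)\dv_g$ and $\frac{\diff}{\diff t}|_{t=0} g(t)^{\mu\nu} = -h^{\mu\nu}$.

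\textbf{Yang--Mills and Higgs sectors ($U$ independent of $g$).} The curvature $F_\omega$ and the one-form $\nabla_\omega\Phi$ do not depend on $g$; only the contractions $|F_\omega|^2=\frac12 g^{\mu\alpha}g^{\nu\beta}\langle F_{\mu\nu},F_{\alpha\beta}\rangle$ and $|\nabla_\omega\Phi|^2 = g^{\mu\nu}\Re\langle(\nabla_\omega\Phi)_\mu,(\nabla_\omega\Phi)_\nu\rangle$ do. Differentiating these contractions and the volume form produces the stated $T^{\mathrm{YM}}$ and $T^{\mathrm{Higgs}}$, with the overall minus sign of $L^{\mathrm{SM}}$ fixing the signs. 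This part is routine.

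\textbf{Dirac sector.} Here the spinor bundle itself depends on $g$, so we follow the Bourguignon--Gauduchon approach (as in \cite{MR1158762}, \cite{MR2121740}). We identify spinor bundles for $g(t)$ and $g$ through the parallel transport / canonical isometry induced by $g(t)^{-1/2}$-type maps $b_t$ with $g(t)(b_tX,b_tY)=g(X,Y)$. In this identification
\[
\frac{\diff}{\diff t}\Big|_{t=0}\dirac^{g(t)}_\omega\Psi=-\frac12\sum h(e_\mu,e_\nu)\,\eta^{\mu\mu}\eta^{\nu\nu}e_\mu\cdot\nabla_{e_\nu}\Psi+\frac14\big(\grad\tr h-\operatorname{div}h\big)\cdot\Psi .
\]
The twisting does not change this, since $\omega$ is $g$-independent. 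After taking $\Re\langle\Psi,i\,\cdot\rangle$, the zeroth-order term is purely imaginary (or vanishes) because of \eqref{eq-clifford-mult} and self-adjointness of $iX\cdot$, so it drops out. The Yukawa term $\langle\Psi,i\Y_\Phi\Psi\rangle$ is pointwise $g$-independent under this identification.

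The volume variation contributes $-\frac12\tr(h)\,\Re\langle\Psi,i(\dirac_\omega+\Y_\Phi)\Psi\rangle$, and this vanishes at a critical point by \eqref{eq-dirac}. This is exactly why the theorem is stated at a critical point. Symmetrizing then gives $T^{\mathrm{Dirac}}$. The main obstacle is this step: keeping the signs of the Lorentzian Clifford convention \eqref{eq-clifford-rel} straight and justifying the identification of spinor bundles.

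\textbf{Conformal Higgs sector.} Additional terms come from $\frac{m-2}{4(m-1)}\Scal_g|\Phi|^2$, using the standard formula
\[
\frac{\diff}{\diff t}\Big|_{t=0}\Scal_{g(t)}=-\langle\Ric,h\rangle+\operatorname{div}\operatorname{div}h-\Box\tr h,
\]
with the paper's sign convention for $\Box$. Integrating by parts moves the derivatives onto $|\Phi|^2$. This yields terms $\nabla^2|\Phi|^2 - (\Box|\Phi|^2)g + |\Phi|^2\Ric$, and we expand $\nabla^2|\Phi|^2=2\Re\langle\Phi,\nabla^2_\omega\Phi\rangle+2\Re\langle\nabla_\omega\Phi,\nabla_\omega\Phi\rangle$.

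Finally, we eliminate $\Re\langle\Phi,\Box_\omega\Phi\rangle$ in the trace part using the Higgs equation \eqref{eq-higgs}. We pair it with $\Phi$, use \eqref{eq-yukawa-complex-antilinear} to rewrite $\Re\langle\Phi,\mathfrak J_3\rangle=\frac12\langle\Psi,i\Y_\Phi\Psi\rangle$, and use $\Re\langle\Phi,\grad U_\Phi\rangle=\frac{m-2}{2(m-1)}\Scal|\Phi|^2+2\lambda|\Phi|^4$. Collecting coefficients over the common factor $\frac1{2(m-1)}$ should reproduce the displayed $T^{\mathrm{c-Higgs}}$. We will check the constants $m$, $2-m$ and $\frac{m-3}{2}$ by taking the trace.
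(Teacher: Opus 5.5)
Your outline follows the paper's proof step by step, but the conformal step has a sign error that would keep you from reaching the stated $T^{\mathrm{c-Higgs}}$. The shared steps are: contraction variations for Yang--Mills and the Higgs kinetic term; the Bourguignon--Gauduchon/B\"ar--Gauduchon--Moroianu identification for the Dirac sector, with the $g$-term killed by \eqref{eq-dirac}; and the scalar-curvature variation plus the Higgs equation for $U_{\mathrm{conf}}$.

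Integrate $-\frac{m-2}{4(m-1)}|\Phi|^2\,\frac{\diff}{\diff t}\Scal_{g(t)}$ by parts using your own variation formula. The term $\operatorname{div}\operatorname{div}h$ needs two integrations by parts, so its sign is unchanged, and $-\langle\Ric,h\rangle$ needs none. The result is
\[
\frac{m-2}{4(m-1)}\Big(\Box(|\Phi|^2)\,g-\nabla^2(|\Phi|^2)+|\Phi|^2\Ric_g\Big).
\]
The relative sign between the Ricci term and the two derivative terms is therefore the opposite of your combination $\nabla^2|\Phi|^2-(\Box|\Phi|^2)g+|\Phi|^2\Ric$. With your combination, no overall prefactor can work. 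The target has coefficient $\frac{2-m}{2(m-1)}$ on $\Re\langle\Phi,\nabla^2_\omega\Phi\rangle$ and $\frac{m-2}{4(m-1)}$ on $|\Phi|^2\Ric_g$, a ratio of $-2$. Since $\nabla^2|\Phi|^2=2\Re\langle\Phi,\nabla^2_\omega\Phi\rangle+2\Re\langle\nabla_\omega\Phi\otimes\nabla_\omega\Phi\rangle$, your combination forces a ratio of $+2$.

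Once the sign is fixed, the rest of your plan reproduces every coefficient exactly as in the paper:
\begin{enumerate}
\item expand $\nabla^2|\Phi|^2$;
\item rewrite $\frac12\Box|\Phi|^2=|\nabla_\omega\Phi|^2+\frac{m-2}{4(m-1)}\Scal_g|\Phi|^2+\lambda|\Phi|^4+\frac12\langle\Psi,i\Y_\Phi\Psi\rangle$ using \eqref{eq-higgs} and \eqref{eq-yukawa-complex-antilinear};
\item add the volume term $-\frac12(|\nabla_\omega\Phi|^2+U_{\mathrm{conf}})g$.
\end{enumerate}
Your planned trace check would have caught this error.

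There is also a smaller slip in the Dirac step. By \eqref{eq-clifford-mult} it is $X\cdot$ that is self-adjoint, so $iX\cdot$ is skew-adjoint. That is why $\Re\langle\Psi,i(\grad\tr h-\operatorname{div}h)\cdot\Psi\rangle=0$. If $iX\cdot$ were self-adjoint, as you state, this pairing would be real and the zeroth-order term would not drop out.
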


\begin{proof}
We consider a variation of the metric \eqref{eq:variation-metric}.
Recall that the cometric coefficients then satisfy
\begin{equation}\label{eq:variation-cometric}
    \frac{\diff}{\diff t}\bigg|_{t=0} g(t)^{\mu\nu} = -h^{\mu\nu},
\end{equation}
where the indices on the right-hand side are raised with respect to $g=g(0)$.
Recall also that the variation of the volume form satisfies
\begin{equation}\label{eq:variation-volume}
    \frac{\diff}{\diff t}\bigg|_{t=0}\dv_{g(t)}=\frac{1}{2} \tr_h(g) \dv_g,
\end{equation}
while the scalar curvature satisfies
\begin{align}
     \label{eq:variation-scalar}
        \frac{\diff}{\diff t}\bigg|_{t=0}\Scal_{g(t)}=
        -\Box_g(\tr_h(g))  + \nabla_g^*\nabla_g^* h  - \tr_h(\Ric_g),
\end{align}
see e.g.\ \cite[Theorem 1.174]{MR2371700} (but note the different convention for the sign of $\Box_g$).

By \eqref{eq:variation-volume} and the definition \eqref{eq-energy-momentum-def} of the energy-momentum tensor we first observe that
\begin{equation*}
    \int_K \tr_h(T^{\mathrm{SM}}) \dv_g
    = \int_K \left( \frac{\diff}{\diff t}\bigg|_{t=0} L^{\mathrm{SM}}_{g(t)}(\omega,\Phi,\Psi) + \tr_h\left(\frac12 L^{\mathrm{SM}}_g(\omega,\Phi,\Psi) g\right)  \right)\dv_{g},
\end{equation*}
so that it suffices to compute the variation of the density
\begin{equation*}
    \frac{\diff}{\diff t}\bigg|_{t=0} L^{\mathrm{SM}}_{g(t)}(\omega,\Phi,\Psi).
\end{equation*}
We will calculate the variation of each of the three sectors in the Lagrangian density \eqref{eq-sm-lagrangian} separately.

Starting with the Yang-Mills part, 
a direct calculation using \eqref{eq:variation-cometric} shows that
\begin{equation*}
    -\frac{\diff}{\diff t}\bigg|_{t=0}|F_\omega|^2
    =
    -\frac{\diff}{\diff t}\bigg|_{t=0} 
    \left( \frac12 g(t)^{\mu\nu}g(t)^{\alpha\beta}\langle F_{\mu\alpha}, 
    F_{\nu\beta} \rangle \right) 
    = h^{\mu\nu} \langle F_{\mu\alpha}, 
    \tensor{F}{_\nu^\alpha} \rangle
    =
    \tr_h \langle \cdot \iprod F_\omega, \cdot \iprod F_\omega \rangle,
\end{equation*}
which gives the energy-momentum tensor of the Yang-Mills sector, see also \cite{MR2453667}.

While there exist numerous approaches in the physics literature on the derivation of the energy-momentum tensor for the Dirac action (most of them are local in nature) we want to briefly recall
the generalized cylinder construction presented in \cite[Section 6]{MR2121740}.
This construction naturally extends the invariant geometric variation of the Dirac action from the Riemannian case \cite{MR1158762} to metrics of arbitrary signature.
Although we are considering spinors twisted by an additional vector bundle \(\mathscr{S}\) here, we can still apply the main results of \cite{MR2121740} since in our case \(\mathscr{S}\) is a bundle associated to $P$ and is in particular independent of the metric \(g\) of \(M\).
To this end, we form the generalized cylinder \(\mathcal{Z}=I\times M\) which we equip with the metric 
\(g=dt^2+g(t)\), and in the following we abbreviate \((M,g(t))\) by \(M_t\). Spin structures on \(M\) and \(\mathcal{Z}\) are in 1-1 correspondence and one can naturally relate $\Sigma M_t$ with a subbundle of $\Sigma \mathcal{Z}$ for any $t$.
For \(x\in M\) arbitrary and \(t_0,t_1\in I\) parallel translation on \(\mathcal{Z}\)
along the curve \(t\mapsto(t,x)\) is a linear isometry \(\tau_{t_0}^{t_1}\colon\Sigma_xM_{t_0}\to \Sigma_xM_{t_1}\) which is obtained by solving a system of linear ODEs of first order and allows for a natural identification of the spinor bundles $\Sigma M_t$ for different values of $t$.
A calculation then shows the following variation formula for the twisted Dirac operator (see \cite[Theorem 5.1]{MR2121740} for the precise details)
\begin{align}
\label{eq:variation-dirac-cylinder}
    \frac{\diff}{\diff t}\bigg|_{t=0}\tau_t^{0}\dirac^{g_t}_\omega\tau_{0}^t\Psi
    =&-\frac{1}{2} \eta^{\nu\alpha}\eta^{\mu\beta} h(e_\nu,e_\mu)e_\alpha\cdot(\nabla_\omega^{\Sigma M_{0}\otimes \mathscr{S}}\Psi)(e_\beta)  \\
    \nonumber&+\frac{1}{4}\operatorname{grad}^{M_{0}}(\tr_{g_{0}}(h))\cdot\Psi \\
    \nonumber&-\frac{1}{4}\operatorname{div}^{M_{0}}(h)\cdot\Psi.
\end{align}
Here,
\(\dirac^{g_t}_\omega\) represents the
twisted Dirac operator on \(M_t\) and we again emphasize that the twisting bundle \(\mathscr{S}\) is independent of the metric.
Employing the variation formula for the Dirac operator \eqref{eq:variation-dirac-cylinder}
and using the symmetry of Clifford multiplication, we get
\begin{align*}
    -&\frac{\diff}{\diff t}\bigg|_{t=0} \Re\langle\tau_0^t(\Psi), i\dirac^{g_t}_\omega \tau_0^t(\Psi)\rangle
    \\
    &=
    \frac12 \eta^{\nu\alpha}\eta^{\mu\beta} h_{\nu\mu} \, \Re \langle \Psi, ie_\alpha \cdot (\nabla_\omega\Psi)(e_\beta) \rangle - \frac14 \Re \langle \Psi, i\left( \operatorname{grad}_g(\tr_g h) - \operatorname{div}_g(h) \right) \cdot \Psi \rangle
    \\
    &=
    \frac14 h^{\alpha\beta} \, \Re \langle \Psi, ie_\alpha \cdot (\nabla_\omega\Psi)(e_\beta) + ie_\beta \cdot (\nabla_\omega\Psi)(e_\alpha) \rangle,
\end{align*}
where in the last step we also symmetrize the first term.
Furthermore, since the Yukawa-term does not depend on the metric, in the sense that it is 
independent of \(t\) after applying the isometry \(\tau_0^t\), we get
\begin{align*}
-\frac{\diff}{\diff t}\bigg|_{t=0}\Re\langle\tau_0^t(\Psi), \tau_0^t(i\Y_\Phi\Psi) \rangle 
=
-\frac{\diff}{\diff t}\bigg|_{t=0}\Re\langle \Psi, i\Y_\Phi\Psi \rangle 
=0.  
\end{align*}
It follows that $T^{\mathrm{Dirac}}$ is the symmetric tensor
\begin{align*}
    T^{\mathrm{Dirac}}(X,Y) &= \frac{1}{4}\Re \langle \Psi, iX\cdot(\nabla_\omega\Psi)(Y) + iY\cdot(\nabla_\omega\Psi)(X)\rangle - \frac12 \Re\langle \Psi, i(\dirac_\omega\Psi + \Y_\Phi\Psi)\rangle g(X,Y)
    \\
    &= \frac{1}{4}\Re \langle \Psi, iX\cdot(\nabla_\omega\Psi)(Y) + iY\cdot(\nabla_\omega\Psi)(X)\rangle,
\end{align*}
where the final line follows since $\dirac_\omega\Psi + \Y_\Phi\Psi = 0$ for stationary points.
This completes the variation of the Dirac sector with respect to the metric.

Regarding the kinetic term of the Higgs sector we find
\begin{equation*}
    -\frac{\diff}{\diff t}\bigg|_{t=0} |\nabla_\omega \Phi|^2
    =
    -\frac{\diff}{\diff t}\bigg|_{t=0} \left( g(t)^{\mu\nu} \Re\langle (\nabla_\omega \Phi)_\mu, (\nabla_\omega \Phi)_\nu \rangle \right)
    = \tr_h \Re\langle \cdot \iprod \nabla_\omega \Phi, \cdot \iprod \nabla_\omega \Phi \rangle,
\end{equation*}
where we used \eqref{eq:variation-cometric}.

If the Higgs potential does not depend on the metric we get
\begin{align*}
    \frac{\diff}{\diff t}\bigg|_{t=0} U(\Phi)=0,
\end{align*}
and the first part of the result follows.

For the conformal potential \eqref{eq-conformal-potential}, we employ \eqref{eq:variation-scalar} and find
\begin{equation*}
    -\int_K \frac{\diff}{\diff t}\bigg|_{t=0} U_{\mathrm{conf}}(\Phi) \dv_g = \frac{m-2}{4(m-1)}\int_K \tr_h\left(  \Box(|\Phi|^2) g -  \nabla^2(|\Phi|^2) + |\Phi|^2 \Ric_g \right) \dv_g.
\end{equation*}
It follows that
\begin{align}
T^{\mathrm{c-Higgs}} =&\,\Re\langle \nabla_\omega \Phi \otimes \nabla_\omega \Phi \rangle - \frac12 \left(|\nabla_\omega\Phi|^2 + \frac{m-2}{4(m-1)} \Scal_g |\Phi|^2 + \frac{\lambda}{2} |\Phi|^4 \right)g
\\ \nonumber
& +  \frac{m-2}{4(m-1)}\left( \Box(|\Phi|^2) g - \nabla^2(|\Phi|^2) + |\Phi|^2 \Ric_g \right). 
\end{align}
Now, note that
\begin{equation*}
    \frac12 \nabla^2|\Phi|^2 = \Re\langle \nabla_\omega \Phi \otimes \nabla_\omega \Phi \rangle + \Re \langle \Phi, \nabla_\omega^2\Phi \rangle
\end{equation*}
and hence
\begin{align*}
    \frac12 \Box |\Phi|^2 &= |\nabla_\omega\Phi|^2 + \Re\langle \Phi, \Box_\omega \Phi \rangle  \\
    &= |\nabla_\omega\Phi|^2 + \frac12 \Re \langle \Phi, \grad U_\Phi \rangle + \Re \langle \Phi, \langle \Psi, i\Y^- \Psi \rangle\rangle\\
    &= |\nabla_\omega\Phi|^2 + \frac{m-2}{4(m-1)} \Scal_g |\Phi|^2 + \lambda |\Phi|^4 + \frac12 \langle \Psi, i\Y_\Phi \Psi \rangle,
\end{align*}
provided that the Euler-Lagrange equation for the Higgs field \eqref{eq-higgs} is satisfied, which gives the desired formula for $T^{\mathrm{c-Higgs}}$ after rearranging the contributing terms.
\end{proof}

As to be expected the energy-momentum tensor obtained in \eqref{dfn:energy-momentum} is symmetric.

\subsection{Properties of the energy-momentum tensor}
This subsection presents a number of features of the energy-momentum tensor that was derived
in the previous subsection.

\begin{Prop}
    If $(\omega, \Phi, \Psi)$ is a solution of the SM equations {\normalfont (\ref{eq-yang-mills}--\ref{eq-dirac})} with a Higgs potential $U$ independent of $g$, then 
    \begin{align*}
        \nabla^\ast T^{\mathrm{SM}} &= 0, \\[0.1cm]
        \tr_g T^{\mathrm{SM}} &= -\frac{1}{2}(m-4)|F_\omega|^2
        -\frac{1}{2}\langle\Psi,i\Y_\Phi\Psi\rangle
        - \frac{1}{2}(m-2)|\nabla_\omega\Phi|^2 - \frac{m}{2}U(\Phi).
    \end{align*}
\end{Prop}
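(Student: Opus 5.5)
The plan splits into two parts: the trace identity and the divergence identity.

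\textbf{Trace.} We contract each piece of Theorem~\ref{thm:energy-momentum-tensors} with $g$, using $\tr_g g=m$.

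For the Yang--Mills part, $\tr_g\langle \cdot\iprod F_\omega,\cdot\iprod F_\omega\rangle = \langle F_{\mu\alpha},F^{\mu\alpha}\rangle = 2|F_\omega|^2$. Hence
\begin{equation*}
\tr_g T^{\mathrm{YM}} = 2|F_\omega|^2 - \tfrac m2|F_\omega|^2 = -\tfrac12(m-4)|F_\omega|^2 .
\end{equation*}

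For the Higgs part, $\tr_g T^{\mathrm{Higgs}} = |\nabla_\omega\Phi|^2 - \tfrac m2\bigl(|\nabla_\omega\Phi|^2+U\bigr)$, which gives the Higgs contributions in the claimed formula.

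For the Dirac part, the trace is $\tfrac12\Re\langle\Psi,i\dirac_\omega\Psi\rangle$. The Dirac equation \eqref{eq-dirac} turns this into $-\tfrac12\Re\langle\Psi,i\Y_\Phi\Psi\rangle$. Since $i\Y_\Phi$ is self-adjoint, this quantity is real and matches the claimed term.

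\textbf{Divergence.} We compute $\nabla^\ast$ of each sector separately, using the Euler--Lagrange equations, and show that the three contributions cancel pairwise through the currents $\mathfrak J_1,\mathfrak J_2,\mathfrak J_3$. The computation is done in a local frame that is synchronous at a point.

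\emph{Yang--Mills.} The Bianchi identity $\diff_\omega F_\omega=0$ gives the standard identity $(\nabla^\ast T^{\mathrm{YM}})(Y)=\pm\langle \diff_\omega^\ast F_\omega, Y\iprod F_\omega\rangle$. By \eqref{eq-yang-mills} this equals $\pm\langle \mathfrak J_1+\mathfrak J_2, Y\iprod F_\omega\rangle$.

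\emph{Higgs.} Differentiating $T^{\mathrm{Higgs}}$ produces $\Re\langle \Box_\omega\Phi,\nabla_Y\Phi\rangle$. It also produces a commutator term $\Re\langle R^{\omega}(e_\mu,Y)\Phi,(\nabla\Phi)^\mu\rangle = \Re\langle \rho_*(F_\omega(e_\mu,Y))\Phi,(\nabla\Phi)^\mu\rangle$, and finally $-\tfrac12\diff U(\nabla_Y\Phi)$. Using \eqref{eq-higgs}, the $\grad U$ term cancels, leaving $\Re\langle\mathfrak J_3,\nabla_Y\Phi\rangle$. The curvature term is exactly the pairing of $\mathfrak J_1$ with $Y\iprod F_\omega$, so it cancels the $\mathfrak J_1$ piece coming from Yang--Mills.

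\emph{Dirac.} This is the main obstacle. We expand $\nabla^\mu T^{\mathrm{Dirac}}_{\mu Y}$ into three kinds of terms. The first kind are terms containing $\dirac_\omega\Psi$, which we replace by $-\Y_\Phi\Psi$. The second kind are second-derivative terms, which we rewrite via the curvature of the twisted connection, i.e.\ spin curvature plus $\chi_*(F_\omega)$. The spin curvature part cancels by the usual Ricci identity $\sum e_\mu\cdot R(e_\mu,Y)\psi = -\tfrac12\Ric(Y)\cdot\psi$ together with symmetry of Clifford multiplication. The $\chi_*(F_\omega)$ part yields $\tfrac12\Im\langle e_\mu\cdot\Psi,\chi_*(F(e_\mu,Y))\Psi\rangle$, i.e.\ the $\mathfrak J_2$ pairing with $Y\iprod F_\omega$, which cancels the remaining Yang--Mills piece. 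The third kind are the Yukawa terms $\Re\langle \Psi, i\Y_{\nabla_Y\Phi}\Psi\rangle$-type contributions, which arise from differentiating the Dirac equation in the direction $Y$. Via \eqref{eq-yukawa-complex-antilinear}, they equal $2\Re\langle\nabla_Y\Phi,\mathfrak J_3\rangle$ up to the factor conventions, and so cancel the Higgs residue.

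I expect the hardest part to be bookkeeping the signs and factors in the Dirac computation. In particular, one must check that the trick of symmetrizing in $X,Y$, combined with $\Re\langle\Psi,i\,\cdot\,\rangle$ and the self-adjointness of $i\dirac_\omega$, reproduces precisely the factor $\tfrac12$ in $\mathfrak J_2$ and the factor in $\mathfrak J_3$.

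A fallback, if the direct computation gets messy, is a diffeomorphism-invariance argument. Vary the metric by $h=\mathcal L_Z g$ and simultaneously pull back all fields along the flow of $Z$. The total action is invariant, and the field-variation terms vanish by the Euler--Lagrange equations, giving $\int \langle T,\mathcal L_Z g\rangle=0$ for all compactly supported $Z$. Hence $\nabla^\ast T^{\mathrm{SM}}=0$. One must note here that $P$-gauge transformations are used to lift the flow to $P$, which is harmless by gauge invariance.
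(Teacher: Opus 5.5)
Your plan is essentially the paper's proof. The trace is computed sector by sector exactly as you do, and $\nabla^\ast T^{\mathrm{SM}}=0$ follows by computing each sector's divergence on-shell: the Bianchi identity and \eqref{eq-yang-mills} for Yang--Mills, the $\rho_*(F_\omega)$ commutator and \eqref{eq-higgs} for Higgs, and the Dirac equation, commutation of covariant derivatives and the Weitzenb\"ock formula \eqref{eq:weitzenboeck-dirac} for Dirac, after which the current terms cancel pairwise.

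One point in your Dirac outline is incomplete: the Yukawa terms do not all come from differentiating the Dirac equation. The term $\Re\langle\Psi, iY\cdot\Box_\omega\Psi\rangle$ has to be handled with the Weitzenb\"ock formula. Then $\dirac_\omega^2\Psi=-\dirac_\omega(\Y_\Phi\Psi)$, together with the Clifford relations, gives a second contribution $\tfrac14\Re\langle\Psi,i\Y_{(\nabla_\omega\Phi)(Y)}\Psi\rangle$ on top of the one from differentiating the Dirac equation. Only their sum, $\tfrac12\langle\Psi,i\Y_{(\nabla_\omega\Phi)(Y)}\Psi\rangle=\Re\langle(\nabla_\omega\Phi)(Y),\mathfrak J_3[\Psi]\rangle$, cancels the Higgs residue.
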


\begin{proof}
We first compute the trace by computing the trace of each sector separately, using Theorem \ref{thm:energy-momentum-tensors}, to get
\begin{align}
    \label{eq:trace-ym}
    \tr_g T^{\mathrm{YM}} &= -\frac{1}{2}(m-4)|F_\omega|^2, \\
    \label{eq:trace-dirac}
    \tr_g T^{\mathrm{Dirac}} &= \frac12 \Re\langle\Psi,i\dirac_\omega\Psi\rangle = -\frac{1}{2}\langle\Psi,i\Y_\Phi\Psi\rangle, \\
    \label{eq:trace-higgs}
    \tr_g T^{\mathrm{Higgs}} &= -\frac{1}{2}(m-2)|\nabla_\omega\Phi|^2-\frac{m}{2}U(\Phi).
\end{align}
The formula for $\tr_g T^{\mathrm{SM}}$ then follows by summing up all the contributions.

Next, we show that $T^{\mathrm{SM}}$ is divergence-free.
We start by calculating the divergence of the Yang-Mills contribution,
\begin{align}\nonumber
    (\nabla^\ast T^{\mathrm{YM}}) (e_\nu) 
    &= -\eta^{\lambda\mu}(\nabla T^{\mathrm{YM}})(e_\lambda, e_\mu, e_\nu)\\[0.1cm]\nonumber
    &= -\eta^{\lambda\mu}\langle(\diff_\omega^\ast F_\omega)_\lambda, F_{\mu\nu}\rangle\\\label{eq-ym-energy-momentum-divergence}
    &= \Re\langle (\nabla_\omega \Phi)^\mu, \rho_*(F_{\mu\nu}) \Phi \rangle
    - \frac12 \Im \langle e^\mu \cdot \Psi, \chi_*(F_{\mu\nu})\Psi \rangle
\end{align}
where we used the Bianchi identity in the second step and \eqref{eq-yang-mills} in the last step.

Concerning the divergence of the Dirac-contribution, we calculate
\begin{align*}
(\nabla^\ast T^{\mathrm{Dirac}}) (e_\nu) =&\,
-\eta^{\lambda\mu}(\nabla T^{\text{Dirac}})(e_\lambda, e_\mu, e_\nu)
\\
=& -\frac{1}{4}\Re\langle(\nabla_\omega\Psi)^\mu, ie_\mu\cdot(\nabla_\omega\Psi)_\nu\rangle 
-\frac{1}{4}\Re\langle(\nabla_\omega\Psi)^\mu, i e_\nu\cdot(\nabla_\omega\Psi)_\mu\rangle \\
&-\frac{1}{4}\Re\langle\Psi,i(\ferdirac{\omega}\nabla_\omega\Psi)_\nu\rangle 
-\frac{1}{4}\Re\langle\Psi,i e_\nu\cdot\Box_\omega \Psi\rangle.
\end{align*}

Now, we note that
\begin{equation*}
     \Re\langle(\nabla_\omega\Psi)^\mu, ie_\mu \cdot (\nabla_{\omega}\Psi)_\nu\rangle
     =-\Re\langle i\ferdirac{\omega}\Psi, (\nabla_{\omega}\Psi)_\nu\rangle
     =\Re\langle i\Y_\Phi\Psi, (\nabla_{\omega}\Psi)_\nu \rangle,
\end{equation*}
where we used the Euler-Lagrange equation for \(\Psi\), i.e. \eqref{eq-dirac}.
Moreover, since Clifford multiplication is symmetric \eqref{eq-clifford-mult}, multiplication by $ie_\nu$ is a skew-adjoint operator, and thus we can conclude $\Re \langle (\nabla_\omega \Psi)^\mu, ie_\nu \cdot (\nabla_\omega \Psi)_\mu \rangle = 0$.
Interchanging covariant derivatives yields
\begin{align*}
(\ferdirac{\omega}\nabla_\omega\Psi)_\nu 
&=
(\nabla_\omega\ferdirac{\omega}\Psi)_\nu 
+e^\mu\cdot R^{\Fer_+}(e_\mu,e_\nu)\Psi\\
&=
(\nabla_\omega\ferdirac{\omega}\Psi)_\nu 
+\underbrace{e^\mu\cdot R^{\Sigma M}(e_\mu,e_\nu)\Psi}_{=\frac{1}{2}\Ric^M(e_\nu)\cdot\Psi}
+ e^\mu\cdot \chi_\ast(F_{\mu\nu})\Psi,
\end{align*}
which allows us to deduce
\begin{align*}
\Re\langle\Psi, i(\ferdirac{\omega}\nabla_{\omega}\Psi)_\nu\rangle
&= \Re\langle\Psi,i(\nabla_{\omega}\ferdirac{\omega}\Psi)_\nu\rangle
+\frac{1}{2}\Re\langle\Psi, i\Ric^M(e_\nu)\cdot\Psi\rangle
+\Re\langle\Psi, ie^\mu\cdot \chi_\ast(F_{\mu\nu})\Psi\rangle
\\
&= \Re\langle\Psi,i(\nabla_{\omega}\ferdirac{\omega}\Psi)_\nu\rangle
+ \Re\langle\Psi, ie^\mu\cdot \chi_\ast(F_{\mu\nu})\Psi\rangle
\end{align*}
where the second term again vanishes since Clifford multiplication is symmetric such that $i\Ric^M(e_\nu)$ is skew-adjoint.
Using the Weitzenböck formula for the twisted Dirac operator \eqref{eq:weitzenboeck-dirac}
we can infer 
\begin{equation*}
\Re\langle\Psi, i e_\nu\cdot\Box_\omega\Psi\rangle
= -\Re\langle\Psi, i e_\nu\cdot\ferdirac{\omega}^2\Psi\rangle
+\frac{1}{4}\Scal_g
\underbrace{\Re\langle\Psi,i e_\nu\cdot\Psi\rangle}_{=0}
+\Re\langle\Psi, i e_\nu\cdot\chi_\ast(F_\omega)\cdot\Psi\rangle.
\end{equation*}
Combining the above identities we arrive at
\begin{align*}
(\nabla^\ast T^{\mathrm{Dirac}})(e_\nu)
=&-\frac{1}{4}\Re\langle i \Y_\Phi\Psi, (\nabla_{\omega}\Psi)_\nu\rangle
-\frac{1}{4}\Re\langle\Psi, i(\nabla_{\omega}\ferdirac{\omega}\Psi)_\nu\rangle \\
&+ \frac{1}{4}\Re\langle\Psi, ie_\nu\cdot\ferdirac{\omega}^2\Psi\rangle
- \frac{1}{4}\Re\langle\Psi, ie_\nu\cdot\rho_\ast(F_\omega)\cdot\Psi\rangle.
\end{align*}
Now, using the Euler-Lagrange equation \eqref{eq-dirac} for the twisted spinor \(\Psi\) we find
\begin{align*}
(\nabla_{\omega}\ferdirac{\omega}\Psi)_\nu
&=
-\Y_{(\nabla_\omega\Phi)_\nu}\Psi
-\Y_\Phi(\nabla_\omega\Psi)_\nu,\\
\ferdirac{\omega}^2\Psi
&= \Y_\Phi(\Y_\Phi\Psi)
- \Y_{\nabla_\omega\Phi} \cdot \Psi.
\end{align*}
Thus, we get
\begin{align*}
\Re\langle\Psi, i(\nabla_{\omega}\ferdirac{\omega}\Psi)_\nu\rangle
&=\Re\langle\Psi, i\Y_{(\nabla_\omega\Phi)_\nu}\Psi\rangle
+\Re\langle\Psi,i\Y_\Phi(\nabla_\omega\Psi)_\nu\rangle,\\
\Re\langle\Psi,ie_\nu\cdot\ferdirac{\omega}^2\Psi\rangle
&=\underbrace{\Re\langle\Psi, ie_\nu\cdot \Y_\Phi(\Y_\Phi\Psi)\rangle}_{=0}
-\Re\langle\Psi, ie_\nu\cdot e_\mu \cdot \Y_{(\nabla_\omega\Phi)^\mu}\Psi\rangle.
\end{align*}

Using the Clifford relations \eqref{eq-clifford-rel}, we find that
\begin{align*}
    \Re \langle\Psi, ie_\nu \cdot e_\mu \cdot \Y_{(\nabla_\omega\Phi)^\mu} \Psi \rangle
    &= \Re \langle e_\mu \cdot ie_\nu \cdot \Y_{(\nabla_\omega\Phi)^\mu} \Psi, \Psi \rangle\\
    &= -\Re \langle ie_\nu \cdot e_\mu \cdot \Y_{(\nabla_\omega\Phi)^\mu} \Psi, \Psi \rangle
    -2 \Re \langle i \Y_{(\nabla_\omega\Phi)_\nu} \Psi, \Psi \rangle,
\end{align*}
so that
\begin{equation*}
    \Re \langle\Psi, ie_\nu \cdot e_\mu \cdot \Y_{(\nabla_\omega\Phi)^\mu} \Psi \rangle
    = - \Re \langle \Psi, i \Y_{(\nabla_\omega\Phi)_\nu} \Psi \rangle,
\end{equation*}
where we have used that $\Y$ is skew-adjoint.
Hence, for the divergence of the Dirac part of the energy-momentum tensor we get
\begin{equation*}
(\nabla^\ast T^{\mathrm{Dirac}})(e_\nu)=
\frac{1}{2}\Re \langle \Psi, i\Y_{(\nabla_\omega\Phi)_\nu} \Psi \rangle
-\frac{1}{4}\Re\langle\Psi, ie_\nu\cdot\chi_\ast(F_\omega)\cdot\Psi\rangle
-\frac{1}{4}\Re\langle\Psi, ie^\mu\cdot \chi_\ast(F_{\mu\nu})\Psi\rangle,
\end{equation*}
where we used that \(i\Y_\Phi\) is self-adjoint.
Using the symmetry of Clifford multiplication \eqref{eq-clifford-mult} repeatedly shows that
\begin{equation*}
    \Re\langle\Psi, ie_\nu\cdot\chi_\ast(F_\omega)\cdot\Psi\rangle = \Re\langle \Psi, ie^\mu\cdot \chi_\ast(F_{\mu\nu})\Psi\rangle = - \Im\langle e^\mu\cdot \Psi, \chi_\ast(F_{\mu\nu})\Psi\rangle,
\end{equation*}
and hence
\begin{equation}\label{eq-dirac-energy-momentum-divergence}
(\nabla^\ast T^{\mathrm{Dirac}})(e_\nu)=
\frac{1}{2}\langle \Psi, i\Y_{(\nabla_\omega\Phi)_\nu} \Psi \rangle
+\frac{1}{2}\Im\langle e^\mu\cdot \Psi, \chi_\ast(F_{\mu\nu})\Psi\rangle.
\end{equation}

Finally, regarding the Higgs contribution of the energy-momentum tensor we find (assuming that $U$ is independent of $g$)
\begin{align}\nonumber
    (\nabla^\ast T^{\mathrm{Higgs}})(e_\nu)
    =&\, -\eta^{\lambda\mu} (\nabla T^{\mathrm{Higgs}})(e_\lambda, e_\mu, e_\nu)\\\nonumber
    =&\, - \Re\langle \tensor{(\nabla_\omega^2 \Phi)}{^\mu_\mu}, (\nabla_\omega\Phi)_\nu \rangle - \Re\langle (\nabla_\omega\Phi)_\mu, \tensor{(\nabla_\omega^2 \Phi)}{^\mu_\nu} \rangle\\\nonumber
    &+ \Re\langle (\nabla_\omega\Phi)_\mu, \tensor{(\nabla_\omega^2 \Phi)}{_\lambda^\mu} \rangle - \frac12 \diff U_\Phi ( (\nabla_\omega\Phi)_\nu )\\[0.1cm]\nonumber
    =& - \Re\langle \Box_\omega \Phi - \tfrac12 \grad U_\Phi, (\nabla_\omega\Phi)_\nu \rangle - \Re \langle (\nabla_\omega\Phi)^\mu, \rho_\ast(F_{\mu\nu})\Phi \rangle \\[0.1cm]\nonumber
    =&\, -\Re\langle \langle\Psi, i\Y^-\Psi\rangle, (\nabla_\omega\Phi)_\nu \rangle 
    -\Re\langle (\nabla_\omega\Phi)^\mu, \rho_*(F_{\mu\nu})\Phi \rangle\\
    \label{eq-higgs-energy-momentum-divergence}
    =&\, -\frac12 \langle \Psi, i\Y_{(\nabla_\omega\Phi)_\nu} \Psi \rangle -\Re\langle (\nabla_\omega\Phi)^\mu, \rho_*(F_{\mu\nu})\Phi \rangle,
\end{align}
where we have used the Higgs equation (\ref{eq-higgs}).

Now, adding up (\ref{eq-ym-energy-momentum-divergence}, \ref{eq-dirac-energy-momentum-divergence}, \ref{eq-higgs-energy-momentum-divergence}), we see that $\nabla^\ast T^{\mathrm{SM}} = 0$, as desired.
\end{proof}

\begin{Prop}
    If $(\omega, \Phi, \Psi)$ is a solution of the SM equations {\normalfont (\ref{eq-yang-mills}--\ref{eq-dirac})}, then
    \begin{align*}
        \nabla^\ast T^{\mathrm{c-SM}} &= 0, \\[0.1cm]
        \tr_g T^{\mathrm{c-SM}} &= (m-4) \left(-\frac{1}{2} |F_\omega|^2  + \frac{1}{4} \langle \Psi, i \Y_\Phi \Psi \rangle + \frac{\lambda}{4} |\Phi|^4 \right).
    \end{align*}
    In particular, $T^{\mathrm{c-SM}}$ is trace-free if the dimension $m=4$.
\end{Prop}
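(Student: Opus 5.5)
The proof has two parts: the trace identity and the divergence identity. Throughout, $T^{\mathrm{YM}}$ and $T^{\mathrm{Dirac}}$ are unchanged. So \eqref{eq:trace-ym}, \eqref{eq:trace-dirac}, \eqref{eq-ym-energy-momentum-divergence} and \eqref{eq-dirac-energy-momentum-divergence} from the previous proof can be reused verbatim, and all new work concerns $T^{\mathrm{c-Higgs}}$.

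\textbf{Trace.} I will take the trace of the explicit formula for $T^{\mathrm{c-Higgs}}$ in Theorem \ref{thm:energy-momentum-tensors}. The needed traces are:
\begin{itemize}
\item $\tr_g\Re\langle\Phi,\nabla^2_\omega\Phi\rangle=-\Re\langle\Phi,\Box_\omega\Phi\rangle$, with the sign fixed by the paper's convention $\frac12\Box|\Phi|^2=|\nabla_\omega\Phi|^2+\Re\langle\Phi,\Box_\omega\Phi\rangle$;
\item $\tr_g\Re\langle\nabla_\omega\Phi\otimes\nabla_\omega\Phi\rangle=|\nabla_\omega\Phi|^2$;
\item $\tr_g\Ric_g=\Scal_g$ and $\tr_g g=m$.
\end{itemize}
I then replace $\Re\langle\Phi,\Box_\omega\Phi\rangle$ using the Higgs equation \eqref{eq-higgs}, exactly as at the end of the proof of Theorem \ref{thm:energy-momentum-tensors}:
\[
\Re\langle\Phi,\Box_\omega\Phi\rangle=\tfrac{m-2}{4(m-1)}\Scal_g|\Phi|^2+\lambda|\Phi|^4+\tfrac12\langle\Psi,i\Y_\Phi\Psi\rangle .
\]
Collecting terms, the $|\nabla_\omega\Phi|^2$ coefficients give $(m-2)+m-m=m-2$ before the substitution, so I must watch the sign convention carefully. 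The $\Scal_g$ terms should cancel, by the very design of the conformal coupling. The result should be $\tr_g T^{\mathrm{c-Higgs}}=(m-4)\bigl(\frac14\lambda|\Phi|^4+\frac{m-2}{4(m-1)}\cdot(\ldots)\bigr)$ plus a Yukawa multiple. Adding \eqref{eq:trace-ym} and \eqref{eq:trace-dirac} must then produce the stated $(m-4)$-multiple; in particular, the Dirac contribution $-\frac12\langle\Psi,i\Y_\Phi\Psi\rangle$ has to combine with the Higgs Yukawa term into $\frac{m-4}{4}\langle\Psi,i\Y_\Phi\Psi\rangle$. This bookkeeping is routine and serves as the consistency check on the signs. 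The case $m=4$ is then immediate.

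\textbf{Divergence.} I will avoid differentiating the messy final formula directly. Instead, I split $T^{\mathrm{c-Higgs}}=T^{\mathrm{Higgs}}_{U_0}+\frac{m-2}{4(m-1)}S$, where
\begin{itemize}
\item $T^{\mathrm{Higgs}}_{U_0}$ is the $g$-independent-potential tensor for $U_0(\Phi)=\frac{m-2}{4(m-1)}\Scal_g|\Phi|^2+\frac\lambda2|\Phi|^4$, treated as an explicitly $x$-dependent potential (this is the first displayed form of $T^{\mathrm{c-Higgs}}$ in the proof of the theorem);
\item $S=\Box(|\Phi|^2)g-\nabla^2(|\Phi|^2)+|\Phi|^2\Ric_g$.
\end{itemize}
Repeating the computation \eqref{eq-higgs-energy-momentum-divergence} gives the same right-hand side, plus one extra term coming from the explicit $x$-dependence of $U_0$. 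That term is $\frac12\,\partial_\nu U_0$ at fixed $\Phi$, namely $\frac{m-2}{8(m-1)}|\Phi|^2\,\partial_\nu\Scal_g$, up to sign. If $\lambda$ is allowed to be a function, it also contributes $\frac14|\Phi|^4\partial_\nu\lambda$; I will assume $\lambda$ is constant, or note that this term must vanish.

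\textbf{Main obstacle: divergence of $S$.} Write $f=|\Phi|^2$. The trace-reversed Hessian part $\Box f\, g-\nabla^2 f$ has divergence equal to a Ricci term, $\pm\Ric(\nabla f)$, by commuting covariant derivatives. By the contracted Bianchi identity, $\nabla^*(f\Ric)=\pm\Ric(\nabla f,\cdot)\pm\frac12 f\,d\Scal$. The two $\Ric(\nabla f)$ terms cancel, leaving $\pm\frac12 f\,d\Scal$, which should exactly cancel the explicit-dependence term above. This is the familiar statement that $\frac{\delta}{\delta g}\int\Scal f$ is divergence-free up to the $f$-equation, and it is where the sign conventions for $\Box$ and $\nabla^*$ must be tracked carefully. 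Once this cancellation holds, summing with \eqref{eq-ym-energy-momentum-divergence} and \eqref{eq-dirac-energy-momentum-divergence} yields $\nabla^*T^{\mathrm{c-SM}}=0$, as in the previous proposition.
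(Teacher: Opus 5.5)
Your plan works, but one sign in the trace part, which otherwise follows the paper, is wrong. In the paper's conventions $\Box_\omega=\tr_g\nabla^2_\omega$. Tracing $\frac12\nabla^2|\Phi|^2=\Re\langle\nabla_\omega\Phi\otimes\nabla_\omega\Phi\rangle+\Re\langle\Phi,\nabla^2_\omega\Phi\rangle$ is exactly how the identity you quote is obtained, and this is the sign for which \eqref{eq-higgs} is the Euler--Lagrange equation. Hence $\tr_g\Re\langle\Phi,\nabla^2_\omega\Phi\rangle=+\Re\langle\Phi,\Box_\omega\Phi\rangle$, not $-$.

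With your sign the $\Scal_g$ terms do not cancel: a spurious $\frac{(m-2)^2}{4(m-1)^2}\Scal_g|\Phi|^2$ survives, and the $\lambda$-coefficient no longer vanishes at $m=4$. With the correct sign the $|\nabla_\omega\Phi|^2$ terms cancel outright. The $\Scal_g$ coefficients, in units of $\frac{m-2}{4(m-1)}\Scal_g|\Phi|^2$, sum to $(2-m)+2(m-1)-m=0$, and you get exactly
\[
\tr_g T^{\mathrm{c-Higgs}}=\frac{m-4}{4}\,\lambda|\Phi|^4+\frac{m-2}{4}\,\langle\Psi,i\Y_\Phi\Psi\rangle ,
\]
with no extra $(m-4)$-multiple. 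Adding \eqref{eq:trace-ym} and \eqref{eq:trace-dirac} then gives the claim.

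Your divergence argument is correct and genuinely different from the paper's. The paper differentiates the on-shell form $\frac{1}{2(m-1)}(A+B+C)$ directly. That forces it to handle $\nabla^3_\omega\Phi$, to commute derivatives on $T^*M\otimes\Hig$ (producing $\rho_*(F_\omega)$ and $\rho_*(\diff^*_\omega F_\omega)$ terms), and then to differentiate the conformal Higgs equation itself, Yukawa current included.

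You instead differentiate the unsimplified form, which is legitimate since it agrees with the theorem's formula at solutions, and the signs do close. With $\nabla^*T(e_\nu)=-\eta^{\lambda\mu}(\nabla T)(e_\lambda,e_\mu,e_\nu)$, the explicit $x$-dependence of $U_0$ adds $+\frac{m-2}{8(m-1)}|\Phi|^2\,\diff\Scal_g$ to the right-hand side of \eqref{eq-higgs-energy-momentum-divergence}. Meanwhile $\nabla^*(\Box f\,g-\nabla^2 f)=\Ric_g(\grad f,\cdot)$ and $\nabla^*(f\Ric_g)=-\Ric_g(\grad f,\cdot)-\frac12 f\,\diff\Scal_g$. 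So $\frac{m-2}{4(m-1)}\nabla^*S$ cancels the extra term exactly.

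Your route needs only the Ricci commutation for a scalar Hessian and the contracted Bianchi identity, and it never differentiates a field equation. It also shows why the non-minimal coupling does not spoil conservation: $S$ is the metric variation of the diffeomorphism-invariant $\int\Scal_g|\Phi|^2$. The paper's route has the merit of checking the stated closed formula directly.

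Your route also exposes a hypothesis the paper leaves implicit. For non-constant $\lambda$ the residual $\frac14|\Phi|^4\,\diff\lambda$ does not vanish. The paper's Remark allows $\lambda$ to be a function, but its computation of $\nabla_\omega(\lambda|\Phi|^2\Phi)$ treats it as constant. Your fallback ``or note that this term must vanish'' is therefore not available; you must assume $\lambda$ is constant.
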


\begin{Bem}
    The second statement is of course expected since the system is conformally invariant in four dimensions.
\end{Bem}

\begin{proof} 
    Taking the trace  of $T^{\mathrm{c-Higgs}}$ in Theorem \ref{thm:energy-momentum-tensors} using the conformal Higgs equation \eqref{eq-higgs} we get
    \begin{equation*}
        \tr_g T^{\mathrm{c-Higgs}}
        = \frac{m-4}{4} \lambda |\Phi|^4 + \frac{m-2}{4} \langle \Psi, i \Y_\Phi \Psi \rangle.
    \end{equation*}
    The claimed formula follows by adding the above with \eqref{eq:trace-ym} and \eqref{eq:trace-dirac}.
    In particular, we see that $T^{\mathrm{c-SM}}$ is traceless when $m=4$.

    To compute the divergence, let us define
    \begin{align*}
        A &:= (2-m) \Re \langle \Phi, \nabla^2_\omega \Phi \rangle + m\Re \langle \nabla_\omega\Phi \otimes \nabla_\omega\Phi \rangle - |\nabla_\omega\Phi|^2 g,\\[0.1cm]
        B &:= \frac{m-2}{2}|\Phi|^2\Ric_g - \frac{m-2}{4(m-1)}\Scal_g|\Phi|^2g, \\[0.1cm]
        C &:= \left(\frac{m-3}{2} \lambda|\Phi|^4 + \frac{m-2}{2}\langle \Psi, i\Y_\Phi \Psi \rangle\right) g,
    \end{align*}
    so that $T^{\mathrm{c-Higgs}} = \frac{1}{2(m-1)}(A+B+C)$.
    We calculate
    \begin{equation*}
        \tensor{(\nabla \Re \langle \Phi, \nabla^2_\omega \Phi \rangle)}{^\mu_\mu_\nu}
        = \Re \left\langle (\nabla_\omega\Phi)^\mu, (\nabla^2_\omega \Phi)_{\mu\nu} \right\rangle
        + \Re \left\langle \Phi, \tensor{(\nabla^3_\omega\Phi)}{^\mu_\mu_\nu} \right\rangle, 
    \end{equation*}
    and we also have
    \begin{align*}
        \Re \left\langle \Phi, \tensor{(\nabla^3_\omega\Phi)}{^\mu_\mu_\nu} \right\rangle 
        &= \Re \left\langle \Phi,\tensor{(\nabla^3_\omega\Phi)}{^\mu_\nu_\mu} + \tensor{(\nabla_\omega (R^{\Hig}\Phi))}{^\mu_\mu_\nu} \right\rangle\\[0.1cm]
        &= \Re \left\langle \Phi,\tensor{(\nabla^3_\omega\Phi)}{_\nu^\mu_\mu} + \tensor{(R^{T^\ast M \otimes \Hig}(\nabla_\omega \Phi))}{^\mu_\nu_\mu} + \tensor{(\nabla_\omega (R^{\Hig}\Phi))}{^\mu_\mu_\nu} \right\rangle\\[0.1cm]
        &= \Re \left\langle \Phi,(\nabla_\omega \Box_\omega\Phi)_\nu - \tensor{R}{^\mu_\nu_\mu^\alpha}(\nabla_\omega \Phi)_\alpha + 2 \rho_*(F_{\mu\nu}) (\nabla_\omega \Phi)^\mu - \rho_\ast((\diff_\omega^\ast F_\omega)_\nu)\Phi \right\rangle\\[0.2cm]
        &= \Re \left\langle \Phi,(\nabla_\omega \Box_\omega\Phi)_\nu \right\rangle + \Re \left\langle \Phi, \Ric_{\mu\nu} (\nabla_\omega \Phi)^\mu\right\rangle - 2 \Re\left\langle  (\nabla_\omega \Phi)^\mu, \rho_*(F_{\mu\nu})\Phi \right\rangle,
    \end{align*}
    where we use the fact that $\rho_*$ maps into the unitary algebra.
    Next, we observe that
    \begin{align*}
        \tensor{(\nabla \Re \langle \nabla_\omega\Phi \otimes \nabla_\omega\Phi \rangle)}{^\mu_\mu_\nu} &= \Re \left\langle \Box_\omega \Phi , (\nabla_\omega\Phi)_\nu \right\rangle + \Re \left\langle (\nabla_\omega\Phi)^\mu, \tensor{(\nabla_\omega^2 \Phi)}{_\mu_\nu} \right\rangle, \\[0.1cm]
        \tensor{(\nabla (|\nabla_\omega\Phi|^2 g))}{^\mu_\mu_\nu}
        &= \diff |\nabla_\omega\Phi|^2(e_\nu)\\
        &= 2 \Re \left\langle (\nabla_\omega\Phi)^\mu, \tensor{(\nabla_\omega^2 \Phi)}{_\nu_\mu} \right\rangle\\
        &= 2\Re \left\langle (\nabla_\omega\Phi)^\mu, \tensor{(\nabla_\omega^2 \Phi)}{_\mu_\nu} \right\rangle - 2\Re \langle (\nabla_\omega\Phi)^\mu, \rho_\ast(F_{\mu\nu}) \Phi \rangle.
    \end{align*}
    It follows that if we take the divergence of $A$, all the terms containing $\tensor{(\nabla_\omega^2 \Phi)}{_\mu_\nu}$ cancel out, and we get
    \begin{align*}
        \tensor{(\nabla A)}{^\mu_\mu_\nu} 
        =&\, (2-m) \,\Re \langle \Phi, (\nabla_\omega \Box_\omega\Phi)_\nu \rangle 
        + m \Re \left\langle \Box_\omega \Phi , (\nabla_\omega\Phi)_\nu \right\rangle
        \\[0.1cm]
        &  + (2-m) \, \Re \langle \Phi, \Ric_{\mu\nu}(\nabla_\omega \Phi)^\mu \rangle
        + 2(m-1) \, \Re \langle (\nabla_\omega \Phi)^\mu, \rho_*(F_{\mu\nu}) \Phi \rangle.
    \end{align*}
    Next, we calculate
    \begin{align*}
        \tensor{(\nabla (|\Phi|^2\Ric_g))}{^\mu_\mu_\nu} 
        &= 
        (\diff|\Phi|^2)^\mu \Ric_{\mu\nu} + |\Phi|^2 \tensor{(\nabla \Ric_g)}{^\mu_\mu_\nu}\\[0.1cm]
        &= 
        2\Re \langle \Phi, \Ric_{\mu\nu} (\nabla_\omega\Phi)^\mu \rangle + \frac12 (\diff\Scal_g)_\nu |\Phi|^2,
    \end{align*}
    \begin{equation*}
        \tensor{(\nabla (\Scal_g|\Phi|^2g))}{^\mu_\mu_\nu} 
        = \diff(\Scal_g|\Phi|^2)_\nu
        = (\diff \Scal_g)_\nu |\Phi|^2 + 2 \Scal_g \Re\langle \Phi, (\nabla_\omega\Phi)_\nu \rangle,
    \end{equation*}
    where we have used the well-known relation between the divergence of the Ricci tensor and the scalar curvature.
    Then if we consider the divergence of $A+B$, we see that the terms involving the Ricci curvature cancel, and we are left with
    \begin{align} \label{eq-A+B-divergence}
        \tensor{(\nabla (A+B))}{^\mu_\mu_\nu} 
        =&\, (2-m) \,\Re \langle \Phi, (\nabla_\omega \Box_\omega\Phi)_\nu \rangle 
        + m \Re \left\langle \Box_\omega \Phi , (\nabla_\omega\Phi)_\nu \right\rangle
        \\[0.1cm] \nonumber
        & + \frac{(m-2)^2}{4(m-1)} (\diff \Scal_g)_\nu |\Phi|^2 - \frac{(m-2)}{2(m-1)} \Scal_g \Re\langle \Phi, (\nabla_\omega\Phi)_\nu \rangle\\[0.2cm]
        \nonumber
        & + 2(m-1) \, \Re \langle (\nabla_\omega \Phi)^\mu, \rho_*(F_{\mu\nu}) \Phi \rangle.
    \end{align}
    Finally, we expand the $\Box_\omega\Phi$ terms using the conformal Higgs equation. To this end, we calculate
    \begin{align*}
        \Re \langle \Phi, (\nabla_\omega(\lambda|\Phi|^2\Phi))_\nu \rangle
        &= \lambda\Re \langle \Phi, (\diff|\Phi|^2)_\nu \Phi + |\Phi|^2 (\nabla_\omega\Phi)_\nu \rangle,\\
        &= 3 \lambda |\Phi|^2\,\Re \langle \Phi, (\nabla_\omega\Phi)_\nu\rangle,\\[0.2cm]
        \Re \langle \Phi, (\nabla_\omega \langle \Psi, i\Y^-\Psi \rangle)_\nu \rangle
        &= \left(\nabla \Re \langle \Phi, \langle \Psi, i\Y^-\Psi \rangle \rangle\right)_\nu
        - \Re \langle (\nabla_\omega\Phi)_\nu, \langle \Psi, i\Y^-\Psi \rangle \rangle\\
        &= \frac12 \left(\nabla \langle \Psi, i\Y_{\Phi}\Psi \rangle\right)_\nu - \frac12 \langle \Psi, i\Y_{(\nabla_\omega\Phi)_\nu} \Psi \rangle,
    \end{align*} 
    and hence
    \begin{align*}
        \Re \langle \Phi, (\nabla_\omega \Box_\omega\Phi)_\nu \rangle 
        =&\, \frac{m-2}{4(m-1)} (\diff\Scal_g)_\nu |\Phi|^2 + \frac{m-2}{4(m-1)} \Scal_g \Re\langle \Phi, (\nabla_\omega \Phi)_\nu \rangle\\[0.2cm]
        &+ 3\lambda|\Phi|^2 \,\Re \langle \Phi, (\nabla_\omega\Phi)_\nu\rangle
        + \frac12 \left(\nabla \langle \Psi, i\Y_{\Phi}\Psi \rangle\right)_\nu - \frac12 \langle \Psi, i\Y_{(\nabla_\omega\Phi)_\nu} \Psi \rangle,
    \end{align*}
    while
    \begin{equation*}
        \Re \left\langle \Box_\omega \Phi , (\nabla_\omega\Phi)_\nu \right\rangle
        = \left(\frac{m-2}{4(m-1)} \Scal_g 
        + \lambda |\Phi|^2 \right)\Re\langle \Phi, (\nabla_\omega\Phi)_\nu \rangle + \frac12 \langle \Psi, i \Y_{(\nabla_\omega\Phi)_\nu}\Psi \rangle.
    \end{equation*}
    Inserting these identities into (\ref{eq-A+B-divergence}), we see that the terms involving $\Scal_g$ terms cancel, and we get
    \begin{align*}
        \tensor{(\nabla (A+B))}{^\mu_\mu_\nu} 
        =&\, 2(3-m)\lambda|\Phi|^2\Re\langle \Phi, (\nabla_\omega\Phi)_\nu \rangle + \frac{2-m}{2} \left(\nabla \langle \Psi, i\Y_{\Phi}\Psi \rangle\right)_\nu\\
        &+ (m-1)\langle \Psi, i\Y_{(\nabla_\omega\Phi)_\nu} \Psi \rangle + 2(m-1) \, \Re \langle (\nabla_\omega \Phi)^\mu, \rho_*(F_{\mu\nu}) \Phi \rangle.
    \end{align*}
    Finally, the divergence of $C$ cancels the first two terms above, and we conclude that
    \begin{align*}
        (\nabla^\ast T^{\mathrm{c-Higgs}})(e_\nu) 
        &= -\frac{1}{2(m-1)}\tensor{(\nabla (A+B+C))}{^\mu_\mu_\nu} \\[0.1cm]
        &=
        -\frac12 \langle \Psi, i\Y_{(\nabla_\omega\Phi)_\nu} \Psi \rangle - \Re \langle (\nabla_\omega \Phi)^\mu, \rho_*(F_{\mu\nu}) \Phi \rangle,
    \end{align*}
    which cancels with the divergences of $T^{\mathrm{YM}}$ and $T^{\mathrm{Dirac}}$, as before.
\end{proof}

\section{Energy conditions}
In this section we first recall various energy conditions which appear in the study of the Einstein equations coupled to matter fields. We will then check if the various sectors of the Standard Model satisfy or violate these conditions.
The following energy conditions \cite{MR3587813,MR1348164, MR4092298} are often
employed in mathematical general relativity:

\begin{Dfn}\label{def:energy-conditions}
    Let $(M,g)$ be a Lorentzian manifold and let $T$ be the energy-momentum tensor of a matter model.
    Then $T$ is said to satisfy the
    \begin{enumerate}
        \item (NEC) \emph{null energy condition} if $T(X,X) \geq 0$ for null $X \in \Gamma(TM)$,
        \item (WEC) \emph{weak energy condition} if $T(X,X) \geq 0$ for timelike $X \in \Gamma(TM)$,
        \item (SEC) \emph{strong energy condition} if $(T - \frac{1}{m-2} \tr_g(T) g)(X,X) \geq 0$ for timelike $X \in \Gamma(TM)$, 
        \item (DEC) \emph{dominant energy condition} if WEC is satisfied and $Z=-(X \iprod T)^\sharp$ is causal and future-pointing for future-pointing timelike $X\in\Gamma(TM)$.
    \end{enumerate}
\end{Dfn}

\begin{Bem}
Note that, provided the Einstein field equation
\begin{equation*}
    \Ric_g - \frac12 \Scal_g g = T
\end{equation*}
is satisfied, the SEC essentially demands that $\Ric_g(X,X) \geq 0$ for timelike $X \in \Gamma(TM)$, by the equivalent trace-reversed form 
\begin{equation*}
    \Ric_g = T - \frac{1}{m-2} \tr_g(T)g
\end{equation*}
of the Einstein equation.
\end{Bem}

We fix $M = \R \times \Sigma$ with the metric
\begin{equation}\label{eq-ADM-metric}
    g = -N^2 \,\diff t \otimes \diff t + \diff t \otimes \beta^\flat + \beta^\flat \otimes \diff t + \bar{g},
\end{equation}
where
\begin{itemize}
    \item $N : M \to \R$ is a smooth positive function, usually referred to as the \emph{lapse function},
    \item $\beta = \beta_t \in \Gamma(T\Sigma)$ is a one-parameter family of spatial vector fields, usually referred to as the \emph{shift vector},
    \item $\bar{g} = \bar{g}_t$ is a one parameter family of Riemannian metrics on $\Sigma$, and $\flat : \Gamma(T\Sigma) \to \Gamma(T^*\Sigma)$ is the musical isomorphism with respect to $\bar{g}$. 
\end{itemize}

\begin{Bem}
    Note that one commonly also writes the decomposed metric as \cite{MR2406669}
    \begin{equation*}
        g = -\alpha^2 \,\diff t^2 + \bar{g}_{ij} (\diff x^i + \beta^i \,\diff t)\otimes(\diff x^j + \beta^j \,\diff t),
    \end{equation*}
    where $x^i$ are coordinates on $\Sigma$. This is related to the form \eqref{eq-ADM-metric} via $N^2 = \alpha^2 - |\beta|^2$, while the shift vector $\beta$ and the metric $\bar{g} = \bar{g}_{ij} \, \diff x^i \otimes \diff x^j$ are the same in both conventions.
\end{Bem}

A unit timelike normal to $\Sigma$ is given by
\begin{equation*}
    \normal = e_0 = \frac{1}{\sqrt{N^2+|\beta|^2}} (\partial_t - \beta) = \frac{1}{\alpha} (\partial_t - \beta),
\end{equation*}
and we assume the time-orientation is such that $\normal$ is future-pointing.
Since the energy conditions are stated using causal vector fields, let us note that up to rescaling, any future-pointing causal vector field $X \in \Gamma(TM)$ can then be written as
\begin{equation*}
    X = \normal + \xi \in \Gamma(TM),
\end{equation*}
where $\xi \in \Gamma(T\Sigma)$ is such that $|\xi| \leq 1$.
Then $X$ is timelike if $|\xi| < 1$ and null if $|\xi|=1$.

\subsection{Higgs}

We decompose the covariant derivative of the Higgs field into its normal part $(\nabla_\omega \Phi)(\normal) =: \frac{\nabla_\omega\Phi}{\diff\normal}$ and the spatial part $D_\omega \Phi$ which can be viewed as a spatial form, i.e.\ a $t$ dependent form on the Cauchy hypersurfaces.
Then, we can write
\begin{equation*}
    \nabla_\omega \Phi = -\normal^\flat \otimes \frac{\nabla_\omega\Phi}{\diff\normal} + D_\omega\Phi,
\end{equation*}
where $\normal^\flat$ is the musical dual of $\normal$ satisfying $\normal^\flat(\normal) = g(\normal, \normal) = -1$ (hence the minus sign in front of the first term).
Taking a semi-orthonormal frame $e_\mu$ with $e_0 = \normal$, we could also write using index notation
\begin{equation*}
    (\nabla_\omega\Phi)_0 = (\nabla_\omega\Phi)(\normal) = \frac{\nabla_\omega\Phi}{\diff\normal},
    \qquad
    (\nabla_\omega\Phi)_i = (\nabla_\omega\Phi)(e_i) = (D_\omega \Phi)_i.
\end{equation*}
It follows that
\begin{equation*}
    |\nabla_\omega \Phi|^2 
    = \eta^{\mu\nu} \langle (\nabla_\omega \Phi)_\mu, (\nabla_\omega \Phi)_\nu \rangle
    = -\left|\frac{\nabla_\omega\Phi}{\diff\normal}\right|^2 + |D_\omega \Phi|^2,
\end{equation*}
and
\begin{equation*}
    \Re \langle \nabla_\omega \Phi \otimes \nabla_\omega \Phi \rangle
    = \left|\frac{\nabla_\omega \Phi}{\diff\normal}\right|^2 \normal^\flat \otimes \normal^\flat
    - \Re\left\langle \frac{\nabla_\omega \Phi}{\diff\normal}, \, (D_\omega\Phi)_i \right\rangle (\normal^\flat \otimes e^i + e^i \otimes \normal^\flat)
    + \Re\langle D_\omega \Phi \otimes D_\omega \Phi\rangle.
\end{equation*}
Thus, the energy-momentum tensor is given by
\begin{align*}
    T^{\mathrm{Higgs}}
    =&\,
    \frac12 \left( \left|\frac{\nabla_\omega \Phi}{\diff\normal}\right|^2 + |D_\omega\Phi|^2 + U(\Phi) \right) \normal^\flat \otimes \normal^\flat 
    \\
    &- \Re\left\langle \frac{\nabla_\omega \Phi}{\diff\normal}, \, (D_\omega\Phi)_i \right\rangle (\normal^\flat \otimes e^i + e^i \otimes \normal^\flat)
    \\
    &+ \left( \frac12\left|\frac{\nabla_\omega \Phi}{\diff\normal}\right|^2 \delta_{ij} + \Re \langle (D_\omega\Phi)_i, (D_\omega\Phi)_j \rangle - \frac12 |D_\omega \Phi|^2 \delta_{ij} - \frac12 U(\Phi) \delta_{ij}  \right) e^i \otimes e^j.
\end{align*}

Now, for a causal vector field $X = \normal + \xi$ with $|\xi| \leq 1$, we calculate
\begin{align*}
    T^{\mathrm{Higgs}}(X, X)
    =&\, \frac{1 + |\xi|^2}{2} \left| \frac{\nabla_\omega\Phi}{\diff \normal} \right|^2 
    + \frac{1-|\xi|^2}{2} \left( |D_\omega\Phi|^2 + U(\Phi) \right)\\
    &+ |(D_\omega \Phi)(\xi)|^2 + 2\Re\left\langle \frac{\nabla_\omega\Phi}{\diff\normal},\, (D_\omega\Phi)(\xi) \right\rangle. 
\end{align*}
In particular, if $|\xi|=1$, then we get
\begin{equation*}
    T^{\mathrm{Higgs}}(X, X) = \left| \frac{\nabla_\omega\Phi}{\diff \normal} + (D_\omega\Phi)(\xi) \right|^2 \geq 0,
\end{equation*}
so that the NEC is always satisfied (even regardless of the potential).
If $\xi=0$, then we recover the usual formula for the energy density
\begin{equation*}
    T^{\mathrm{Higgs}}(\normal,\normal) = \frac12 \left( \left|\frac{\nabla_\omega\Phi}{\diff \normal} \right|^2 + |D_\omega\Phi|^2 + U(\Phi) \right),
\end{equation*}
which shows that the WEC is satisfied in the normal direction assuming that the potential $U$ is non-negative.
In fact, if one assumes that a Higgs field needs to be covariantly constant (i.e.\ $\nabla_\omega\Phi = 0$) at its vacuum state, then the non-negativity of the potential is also necessary.
To see that the WEC is satisfied more generally,%
\footnote{In principle, one can more easily show the WEC is satisfied for all (unit) timelike vectors $X$ more simply by taking a spacetime frame having $\tilde{e}_0 = X$, as we shall demonstrate below.}
we can let
\begin{equation*}
    \gamma = \sqrt{\frac{1+|\xi|^2}{2}},
\end{equation*}
so that $|\xi| \leq \gamma \leq 1$.
Then, we can write
\begin{align*}
    T^{\mathrm{Higgs}}(X, X)
    =&\, \left| \gamma \, \frac{\nabla_\omega\Phi}{\diff\normal} + \frac{1}{\gamma}\, (D_\omega\Phi)(\xi) \right|^2 + \frac{1-|\xi|^2}{2} \left( |D_\omega\Phi|^2 - \frac{1}{\gamma^2}|(D_\omega \Phi)(\xi)|^2  + U(\Phi)\right).
\end{align*}
Now, it follows that
\begin{equation*}
    \frac{1}{\gamma}|(D_\omega \Phi)(\xi)|
    \leq 
    \frac{1}{|\xi|}|(D_\omega \Phi)(\xi)|
    \leq
    |D_\omega\Phi|,
\end{equation*}
and hence
\begin{equation*}
    T^{\mathrm{Higgs}}(X, X) \geq \frac{1-|\xi|^2}{2} \, U(\Phi),
\end{equation*}
so that the Higgs field satisfies the WEC if the potential $U(\Phi)$ is non-negative.

Next, note that
\begin{equation*}
    \tr_g (T^{\mathrm{Higgs}}) = -\frac{m-2}{2} |\nabla_\omega\Phi|^2 - \frac{m}{2}\, U(\Phi),
\end{equation*}
and hence for causal $X$,
\begin{align*}
    T(X,X) - \frac{1}{m-2} \tr_g(T) g(X,X)
    & = |(\nabla_\omega\Phi)(X)|^2 - \frac{1}{m-2} U(\Phi) g(X,X)
    \\
    &\geq \left| \frac{\nabla_\omega\Phi}{\diff\normal} + (D_\omega\Phi)(\xi) \right|^2 -\frac{1-|\xi|^2}{m-2} \, U(\Phi).
\end{align*}
Thus, we conclude that the Higgs field satisfies the SEC provided that $U(\Phi) \leq 0$. However, the case of a negative potential is not desirable for applications in physics as this would lead to a system without a state of lowest energy. In particular, the Mexican hat potential does not satisfy the SEC although it is of high relevance in physics. 

Finally, we have
\begin{align*}
    Z^{\mathrm{Higgs}} =& -(X\iprod T^{\mathrm{Higgs}})^\sharp
    \\
    =& \left[-\Re\langle (\nabla_\omega\Phi)(X), (\nabla_\omega\Phi)^\mu \rangle + \frac12 (|\nabla_\omega\Phi|^2 + U(\Phi)) X^\mu \right] e_\mu
    \\
    =&\, \frac12\left( \left| \frac{\nabla_\omega \Phi}{\diff\normal} \right|^2 + |D_\omega\Phi|^2 + U(\Phi) + 2\Re\left\langle \frac{\nabla_\omega\Phi}{\diff\normal},\, (D_\omega\Phi)(\xi) \right\rangle \right)\normal
    \\
    &+ \frac12 \left( -\left|\frac{\nabla_\omega\Phi}{\diff\normal}\right|^2 + |D_\omega\Phi|^2 + U(\Phi) \right)\xi - \Re\left\langle \frac{\nabla_\omega\Phi}{\diff\normal} + (D_\omega\Phi)(\xi),\, (D_\omega\Phi)^i \right\rangle e_i.
\end{align*}
We note that 
\begin{align*}
    g(Z, \normal) 
    =& \,
    - \frac12\left( \left| \frac{\nabla_\omega \Phi}{\diff\normal} \right|^2 + |D_\omega\Phi|^2 + U(\Phi) + 2\Re\left\langle \frac{\nabla_\omega\Phi}{\diff\normal},\, (D_\omega\Phi)(\xi) \right\rangle \right)
    \\
    \leq&\, 
     - \frac12\left( \left| \frac{\nabla_\omega \Phi}{\diff\normal} \right|^2 + |D_\omega\Phi|^2 + U(\Phi) \right) + \left|\frac{\nabla_\omega\Phi}{\diff\normal}\right| |D_\omega\Phi||\xi|
     \\
     <& 
     - \frac12\left[ \left(\left| \frac{\nabla_\omega \Phi}{\diff\normal} \right| - |D_\omega\Phi|\right)^2 + U(\Phi) \right] \leq 0
\end{align*}
if $U \geq 0$, so that $Z$ is future-directed.
For $\xi = 0$ we have
\begin{align*}
    |Z|^2 =& \, - \frac14 \left( \left| \frac{\nabla_\omega \Phi}{\diff\normal} \right|^2 + |D_\omega\Phi|^2 + U(\Phi) \right)^2
    +
    \sum_{i=1}^{m-1} \Re\left\langle \frac{\nabla_\omega\Phi}{\diff\normal},\, (D_\omega\Phi)^i \right\rangle^2 
    \\
    &\leq - \frac14 \left( \left| \frac{\nabla_\omega \Phi}{\diff\normal} \right|^2 - |D_\omega\Phi|^2 \right)^2 - \frac12 U(\Phi) \left( \left| \frac{\nabla_\omega\Phi}{\diff\normal} \right|^2 + |D_\omega\Phi|^2 + \frac12 U(\Phi) \right) \leq 0
\end{align*}
so that $Z$ is indeed causal.
More generally, we compute
\begin{align*}
    |Z|^2 =&\, 
    \Re\langle (\nabla_\omega\Phi)(X), \, (\nabla_\omega\Phi)^\mu \rangle \, \Re \langle (\nabla_\omega\Phi)(X), \, (\nabla_\omega\Phi)_\mu \rangle 
    \\
    & - (|\nabla_\omega\Phi|^2 + U(\Phi)) |(\nabla_\omega\Phi)(X)|^2 + \frac14 (|\nabla_\omega\Phi|^2 + U(\Phi))^2 g(X,X)
    \\
    =& \, |\nabla_\omega\Phi(X)|^2 \left[\Re \langle V, (\nabla_\omega \Phi)^\mu \rangle \, \Re \langle V, (\nabla_\omega\Phi)_\mu \rangle - |\nabla_\omega\Phi|^2 - U(\Phi) \right] \\
    &+ \frac14 (|\nabla_\omega\Phi|^2 + U(\Phi))^2 g(X,X),
\end{align*}
where we have set $V = (\nabla_\omega\Phi)(X) / |(\nabla_\omega\Phi)(X)|$.
We claim that 
\begin{equation*}
    \Re \langle V, (\nabla_\omega \Phi)^\mu \rangle \, \Re \langle V, (\nabla_\omega\Phi)_\mu \rangle - |\nabla_\omega\Phi|^2 \leq 0.
\end{equation*}
It is rather difficult to prove this by expanding $X = \normal + \xi$ as before, so we argue using a more standard technique as follows.
Let $\tilde{e}_0 = X/\sqrt{-|X|^2}$, which is future-directed timelike since $X$ is, and extend this to a spacetime frame $\tilde{e}_\mu$.
Then $V = (\nabla_\omega\Phi)(\tilde{e}_0) / |(\nabla_\omega\Phi)(\tilde{e}_0)|$, and
\begin{align*}
    |\Re \langle V, \nabla_\omega \Phi \rangle|^2 - |\nabla_\omega\Phi|^2
    =& -(\Re \langle V, (\nabla_\omega\Phi)(\tilde{e}_0) \rangle)^2 + \sum_{i=1}^{m-1} (\Re \langle V, (\nabla_\omega\Phi)(\tilde{e}_i) \rangle)^2\\
    &\, + |(\nabla_\omega\Phi)(\tilde{e}_0)|^2 - \sum_{i=1}^{m-1}|(\nabla_\omega\Phi)(\tilde{e}_i)|^2
    \\
    =& \,
    \sum_{i=1}^{m-1} \left[(\Re \langle V, (\nabla_\omega\Phi)(\tilde{e}_i) \rangle)^2 - |(\nabla_\omega\Phi)(\tilde{e}_i)|^2\right]
    \leq 0,
\end{align*}
since $V$ has unit norm and
\begin{equation*}
    \Re \langle V, (\nabla_\omega\Phi)(\tilde{e}_i) \rangle \leq |V| |(\nabla_\omega\Phi)(\tilde{e}_i)| = |(\nabla_\omega\Phi)(\tilde{e}_i)|.
\end{equation*}
It follows that 
\begin{equation*}
    |Z|^2 \leq - U(\Phi)|\nabla_\omega\Phi(X)|^2 +  \frac14 (|\nabla_\omega\Phi|^2 + U(\Phi))^2 |X|^2.
\end{equation*}
Thus $Z$ is causal if $U \geq 0$ since $X$ is timelike.

\subsection{Yang-Mills}

We decompose the curvature form $F_\omega$ into its electric part $E_\omega = \normal \iprod F_\omega$ and magnetic part $B_\omega$.
Note that $E_\omega$ is a spatial 1-form and $B_\omega$ is a spatial 2-form, i.e.\ they can be viewed as $t$-dependent forms on the Cauchy hypersurfaces.
Then, we can write
\begin{equation*}
    F_\omega = -\normal^\flat \wedge E_\omega + B_\omega,
\end{equation*}
or with respect to indices (in a frame with $e_0 = \normal$)
\begin{equation*}
    (F_\omega)_{0i} = F_\omega(\normal, e_i) = (E_\omega)_i, \qquad (F_\omega)_{ij} = F_\omega(e_i, e_j) = (B_\omega)_{ij}.
\end{equation*}
For simplicity, we will usually omit the subscript $\omega$ in index computations.
We have
\begin{align*}
    |F_\omega|^2 
    &= \frac12 \langle F_{\mu\nu}, F^{\mu\nu} \rangle
    = \frac12 \eta^{\mu\alpha}\eta^{\nu\beta} \langle F_{\mu\nu}, F_{\alpha \beta} \rangle
    \\
    &= - \delta^{ik} \langle F_{0i}, F_{0k} \rangle + \frac12 \delta^{ik}\delta^{j\ell} \langle F_{ij}, F_{k\ell}\rangle
    \\
    &=
    - |E_\omega|^2 + |B_\omega|^2,
\end{align*}
and 
\begin{align*}
    \langle \cdot \iprod F_\omega, \cdot \iprod F_\omega \rangle
    =&\,
    |E_\omega|^2 \, \normal^\flat \otimes \normal^\flat
    - \langle E_k, \tensor{B}{_i^k} \rangle \left( \normal^\flat \otimes e^i + e^i \otimes \normal^\flat \right)
    +
    \langle B_{ik}, \tensor{B}{_j^k} \rangle \, e^i \otimes e^j.
\end{align*}
Thus, the energy-momentum tensor is
\begin{align*}
    T^{\mathrm{YM}} 
    =&\,
    \frac12 \left( |E_\omega|^2 + |B_\omega|^2 \right) \normal^\flat \otimes \normal^\flat
    - \langle E_k, \tensor{B}{_i^k} \rangle \left( \normal^\flat \otimes e^i + e^i \otimes \normal^\flat \right)
    \\
    &+ \left( \frac12 |E_\omega|^2 \delta_{ij} + \langle B_{ik}, \tensor{B}{_j^k} \rangle - \frac12 |B_\omega|^2 \delta_{ij} \right) e^i \otimes e^j. 
\end{align*}

In particular, we observe that
\begin{equation*}
    T^{\mathrm{YM}}(\normal, \normal) = \frac12 \left( |E_\omega|^2 + |B_\omega|^2 \right),
\end{equation*}
which shows that the WEC is satisfied in the normal direction.
More generally, for $X=\normal+\xi$ with $0<|\xi| \leq 1$, we have that
\begin{align*}
    X \iprod F_\omega &= \normal \iprod F_\omega + \xi \iprod F_\omega\\
    &= E_\omega + \mathbf{n}^\flat \otimes E_\omega(\xi) + \xi \iprod B_\omega\\
    &= \eta \otimes E_\omega(\xi) + E_\omega + H_\omega,
\end{align*}
where we have defined
\begin{equation*}
    \eta = \mathbf{n}^\flat + \frac{\xi^\flat}{|\xi|^2}, \qquad H_\omega = -\frac{\xi^\flat}{|\xi|^2} \otimes E_\omega(\xi) + \xi \iprod B_\omega.
\end{equation*}
In particular, note that $\langle \eta \otimes E_\omega(\xi), E_\omega + H_\omega \rangle = 0$,
and
\begin{equation*}
    |H_\omega|^2 = \frac{|E_\omega(\xi)|^2}{|\xi|^2} + |\xi\iprod B_\omega|^2,
\end{equation*}
since $(\xi\iprod B_\omega) (\xi) = B_\omega(\xi,\xi) =0$.
Thus, we find
\begin{align*}
    T^{\mathrm{YM}}(X, X)
    &= |X\iprod F_\omega|^2 - \frac12 |F_\omega|^2 h(X,X)\\
    &= |\eta\otimes E_\omega(\xi)|^2 + |E_\omega + H_\omega|^2 + \frac{1-|\xi|^2}{2} (-|E_\omega|^2 + |B_\omega|^2)\\
    &= \frac{1-|\xi|^2}{|\xi|^2} |E_\omega(\xi)|^2 + \frac{1+|\xi|^2}{2}|E_\omega|^2  + 2\langle E_\omega, H_\omega \rangle + |H_\omega|^2  + \frac{1-|\xi|^2}{2} |B_\omega|^2\\
    &= \left|\gamma E_\omega + \frac{1}{\gamma}H_\omega\right|^2 + \left(\frac{1+|\xi|^2}{2} - \gamma^2 \right)|E_\omega|^2\\
    & \quad
    + \left(2 - |\xi|^2 - \frac{1}{\gamma^2}\right) \frac{|E_\omega(\xi)|^2}{|\xi|^2} 
    + \frac{1-|\xi|^2}{2} |B_\omega|^2 - \left(\frac{1}{\gamma^2} - 1\right) |\xi \iprod B_\omega|^2.
\end{align*}
Taking $\gamma^2 = (1+|\xi|^2)/2$ as before, we have $|\xi| \leq \gamma \leq 1$,
and we get
\begin{align*}
    T^{\mathrm{YM}}(X, X)
    &= \left|\gamma E_\omega + \frac{1}{\gamma}H_\omega\right|^2
    + \frac{1-|\xi|^2}{1+|\xi|^2} |E_\omega(\xi)|^2 
    + \frac{1-|\xi|^2}{2} \left( |B_\omega|^2 - \frac{1}{\gamma^2} |\xi \iprod B_\omega|^2 \right)
    \geq 0
\end{align*}
since  $|\xi \iprod B_\omega| \leq |B_\omega| |\xi|$,
such that both the WEC and the NEC are satisfied.

For the SEC we first observe using \eqref{eq:trace-ym} that
\begin{equation*}
    T^{\mathrm{YM}} - \frac{1}{m-2} \tr(T^{\mathrm{YM}}) g
    =
    \langle \cdot \iprod F_\omega, \cdot \iprod F_\omega \rangle
    -
    \frac{1}{m-2} |F_\omega|^2 g.
\end{equation*}
It follows that on causal vectors
\begin{align*}
    T^{\mathrm{YM}}&(X,X) - \frac{1}{m-2} \tr(T^{\mathrm{YM}}) g(X,X)
    \\[0.1cm]
    =&\, |X \iprod F_\omega|^2  - \frac{1}{m-2} |F_\omega|^2 g(X,X)
    \\
    =&\, \frac{1-|\xi|^2}{|\xi|^2} |E_\omega(\xi)|^2 + \frac{m-3 + |\xi|^2}{m-2} |E_\omega|^2 + 2\langle E_\omega, H_\omega \rangle + |H_\omega|^2 + \frac{1 - |\xi|^2}{m-2} |B_\omega|^2
    \\
    =&\,
    \left| \kappa E_\omega + \frac{1}{\kappa} H_\omega \right|^2 + \left( \frac{m-3 + |\xi|^2}{m-2} - \kappa^2 \right) |E_\omega|^2
    \\
    &+ 
    \left(2 - |\xi|^2 - \frac{1}{\kappa^2}\right)\frac{|E_\omega(\xi)|^2}{|\xi|^2}
    +
    \frac{1-|\xi|^2}{m-2} |B_\omega|^2 - \left(\frac{1}{\kappa^2}-1\right)|\xi\iprod B_\omega|^2.
\end{align*}
Analogously as before, we let
\begin{equation*}
    \kappa^2 = \frac{m-3 + |\xi|^2}{m-2},
\end{equation*}
which satisfies $|\xi| \leq \kappa \leq 1$, and leads to
\begin{align*}
    T^{\mathrm{YM}}&(X,X) - \frac{1}{m-2} \tr(T^{\mathrm{YM}}) g(X,X)
    \\[0.1cm]
    &=\left| \kappa E_\omega + \frac{1}{\kappa} H_\omega \right|^2 
    +
    \frac{(m-4 + |\xi|^2)(1-|\xi|^2)}{|\xi|^2(m-3 + |\xi|^2)}|E_\omega(\xi)|^2
    +
    \frac{1-|\xi|^2}{m-2} \left(|B_\omega|^2 - \frac{1}{\kappa^2}|\xi\iprod B_\omega|^2\right).
\end{align*}
Hence, the SEC is satisfied if $m \geq 4$.
In fact, note that the SEC and the WEC are equivalent for $m=4$ since the Yang-Mills Lagrangian is conformally invariant and the energy-momentum tensor is traceless.

Finally, we compute
\begin{align*}
    Z =& \, -(X \iprod T^{\mathrm{YM}})^\sharp
    \\
    =&\, 
    - \left[ \langle X\iprod F_\omega, e_\nu \iprod F_\omega \rangle \eta^{\mu\nu} - \frac12 |F_\omega|^2 X^\mu \right] e_\mu
    \\
    =&\, \frac12 \left( |E_\omega|^2 + 2 \langle E_\omega, \xi \iprod B_\omega \rangle + |B_\omega|^2 \right) \normal
    +
    \frac12 ( -|E_\omega|^2 + |B_\omega|^2 ) \xi
    \\
    &+ \langle E_\omega(\xi), (E_\omega)^i \rangle e_i - \left\langle (E_\omega + \xi \iprod B_\omega)_k, \; B^{ik} \right\rangle e_i.
\end{align*}
Note that
\begin{align*}
    g(Z, \mathbf{n}) =& - \frac12 \left( |E_\omega|^2 + 2 \langle E_\omega, \xi \iprod B_\omega \rangle + |B_\omega|^2 \right)
    \\
    \leq& - \frac12 |E_\omega|^2 + |E_\omega||B_\omega||\xi| - \frac12 |B_\omega|^2
    \\
    <&
    -\frac12 (|E_\omega|^2 - |B_\omega|)^2
    \leq 0, 
\end{align*}
since $|\xi| < 1$ for timelike $X$, such that $Z$ is always future-directed.
To show that it is causal, we note, assuming $\xi = 0$, that
\begin{align*}
    |Z|^2 =&\, -\frac14 ( |E_\omega|^2 + |B_\omega|^2 )^2 + \sum_{i=0}^{m-1} \langle E^j, B_{ij} \rangle^2 
    \\
    \leq&\,  
    -\frac14 ( |E_\omega|^2 + |B_\omega|^2 )^2 + |E|^2 |B|^2
    \\
    =& 
    -\frac14 (|E_\omega|^2 - |B_\omega|^2)^2
    \leq 0.
\end{align*}
More generally, one has
\begin{align*}
    |Z|^2 =& \, \eta^{\mu\nu}\Re \langle X \iprod F_\omega, e_\mu \iprod F_\omega \rangle \, \Re \langle X \iprod F_\omega, e_\nu \iprod F_\omega \rangle 
    - |F_\omega|^2 |X \iprod F_\omega|^2 + \frac14 (|F_\omega|^2)^2 |X|^2
    \\
    \leq& \, \frac14 (|F_\omega|^2)^2 |X|^2 \leq 0,
\end{align*}
where the estimate can be achieved in a similar way as for Higgs by taking a spacetime frame $\tilde{e}_0 = X/\sqrt{-|X|^2}$.
Thus, the Yang-Mills sector satisfies the DEC.

\subsection{Dirac}

For the Dirac energy-momentum tensor, one computes
\begin{equation*}
    T^{\mathrm{Dirac}}(\normal, \normal) = \frac12 \Re \left\langle \Psi, i\normal \cdot \frac{\nabla_\omega \Psi}{\diff\normal} \right\rangle = \frac12 \left(\Re \langle \Psi, ie_k \cdot (\nabla_\omega \Psi)^k \rangle + \langle \Psi, i\Y_\Phi \Psi \rangle \right),
\end{equation*}
where we use the Dirac equation in the last equality.
On null vectors $X$, we have
\begin{equation*}
    T^{\mathrm{Dirac}}(X,X) = \frac12 \Re \left\langle \Psi, iX \cdot (\nabla_\omega \Psi)(X) \right\rangle.
\end{equation*}
Clearly, the energy conditions are inconclusive (on the one hand, the inner product on spinors is indefinite, but on the other hand also the expressions in the identities above have indefinite character regardless of the inner product).

\subsection{Conformal Higgs}

The energy-momentum tensor for conformal Higgs fields has a considerably more complicated structure.
On null vectors $X$, it reads
\begin{align*}
    T^{\mathrm{c-Higgs}}(X,X) &= \frac{m-2}{4(m-1)} \left(-2\Re \langle \Phi, (\nabla^2_\omega\Phi)(X,X) \rangle + \frac{2m}{m-2} |(\nabla_\omega\Phi)(X)|^2 + |\Phi|^2 \Ric(X,X)\right),\\
    &= |(\nabla_\omega\Phi)(X)|^2 + \frac{m-2}{4(m-1)} \left(|\Phi|^2 \Ric(X,X) - \nabla^2 (|\Phi|^2)(X,X)\right).
\end{align*}
Then, we see that $T^{\mathrm{c-Higgs}}$ satisfies the NEC if
\begin{equation}\label{eq-c-higgs-null-condition}
    |\Phi|^2 \Ric \geq \nabla^2 (|\Phi|^2)
\end{equation}
on null vectors.
For the weak energy condition we compute
\begin{align}
    T^{\mathrm{c-Higgs}}(\normal, \normal) 
    =&\,  
    \frac{1}{2(m-1)} \bigg[ (2-m)\Re \left\langle \Phi, (\nabla_\omega^2\Phi)(\normal, \normal) \right\rangle + m \left|\frac{\nabla_\omega\Phi}{\diff\normal}\right|^2 + \frac{m-2}{2} |\Phi|^2 \Ric(\normal, \normal)  
    \nonumber \\
    & \qquad\quad  + |\nabla_\omega\Phi|^2 + \frac{m-2}{4(m-1)} \Scal_g |\Phi|^2 - \frac{m-3}{2} \lambda |\Phi|^4 - \frac{m-2}{2} \langle \Psi, i\Y_\Phi \Psi \rangle  \bigg] \nonumber\\[0.2cm]
    =&\, \frac12 \left( \left| \frac{\nabla_\omega\Phi}{\diff\normal} \right|^2 + |D_\omega \Phi|^2 \right) + \frac14\lambda|\Phi|^4 + \frac{m-2}{4(m-1)}|\Phi|^2 \,\mathrm{Ein}(\normal, \normal) 
    \label{eq-c-higgs-WEC}
    \\
    \nonumber
    &+ \frac{m-2}{2(m-1)} \Big[ D^* \Re\langle \Phi, D_\omega\Phi\rangle -\tr(\sff) \, \Re\langle \Phi, (\nabla_\omega\Phi)(\normal) \rangle \Big]
\end{align} 
where $\mathrm{Ein}_g = \Ric_g - \frac12 \Scal_g g$ is the Einstein tensor, and we also use
\begin{align*}
    \Re\left\langle \Phi, (\nabla_\omega^2\Phi)(\normal, \normal) \right\rangle 
    &= -\Re \langle \Phi, \Box_\omega\Phi \rangle + \Re \langle \Phi, \tensor{(\nabla_\omega^2\Phi)}{^i_i} \rangle\\
    &= -\Re \langle \Phi, \Box_\omega\Phi \rangle + (m-1)H \Re \left\langle \Phi, \frac{\nabla_\omega\Phi}{\diff\normal} \right\rangle - D^\ast \Re\langle \Phi, D_\omega\Phi \rangle - |D_\omega\Phi|^2.
\end{align*}
Moreover, $\sff(X,Y) = g(\mathbf{n}, \nabla_X Y)$ represents the second fundamental form with mean curvature $H = \frac{1}{m-1}\tr_g \sff$.

We conclude that, in general, it will not be possible to give a decisive answer if the energy-momentum tensor for the conformal Higgs potential will satisfy the weak energy condition. \\
The conformal Higgs potential shares many features with Brans-Dicke theory (see \cite{MR1968866} for a general introduction on this topic) which is an extension of general relativity that couples the Einstein-Hilbert action with a scalar field in a non-local way.
From a physics perspective one always expects certain problems arising
from such non-local couplings and this might lead to the fact
the energy conditions for the conformal Higgs potential do not encode
any useful information, see \cite{MR1791109} for an extended discussion on this subject.

For completeness, we want to mention that 
Callan–Coleman–Jackiw introduced a so-called \emph{improved stress–energy tensor} for a scalar field in \cite{MR261922} that enjoys conformal invariance in four
dimensions.  However, their approach assumes a flat space-time; nevertheless, it shares some similarities with the conformal Higgs potential.

The energy conditions from Definition \ref{def:energy-conditions} are usually formulated in general relativity with the goal of gaining  control over the Einstein tensor (or Ricci tensor) for solutions arising from the Einstein field equation. However, in cases where the energy-momentum tensor depends non-trivially on the metric (like for the conformal Higgs potential or more general non-minimal couplings of fields to gravity), it is not entirely clear whether these conditions should be modified. In particular, \eqref{eq-c-higgs-WEC} shows that the Einstein tensor also appears in the timelike component of the energy-momentum tensor itself, so that the notion of energy and the associated conditions might need to be reformulated to take this into account.

\subsection{Summary and outlook}
We summarize our results on the validity of the different energy conditions for the individual sectors of the Standard Model in the following tabular:

\par\medskip

\begin{center}
    \begin{tabular}{c|c|c|c|c}
         & NEC & WEC & SEC & DEC 
       \\ \hline
       Yang-Mills  & \ding{51} & \ding{51} & \ding{51} & \ding{51}
       \\ \hline
       Higgs  & \ding{51} \, if \(U(\Phi)\geq 0\) & \ding{51} \, if \(U(\Phi)\geq 0\) & \ding{51} \, if $U(\Phi) \leq 0$ & \ding{51} if $U(\Phi) \geq 0$
       \\  \hline
       Dirac   & \ding{55} & \ding{55} & \ding{55} & \ding{55}
       \\ \hline
       Conformal Higgs & \ding{55} & \ding{55} & \ding{55} & \ding{55}
    \end{tabular}
\end{center}
\par\medskip
When our considerations led to an inconclusive statement, we have used the symbol \ding{55} to emphasize that the condition will not be satisfied in general. Nevertheless, even if a condition is inconclusive in general, one might still have restricted classes of fields that satisfy the condition, e.g.\ a conformal Higgs field will satisfy the null energy condition if \eqref{eq-c-higgs-null-condition} is satisfied everywhere.

In particular, we observe that the Yang-Mills and the classical Higgs sector without a potential both satisfy all of the energy conditions.
This explains why these two sectors are most represented in the literature on general relativity. In these cases the energy conditions combined with the energy-momentum tensor being divergence-free can be used to derive integrated energy estimates which are useful for analytic purposes. This is also why having a non-zero potential $U$, and/or couplings to the Dirac sector usually introduces a lot of difficulties in the analysis.
Nevertheless, we hope that the presented calculations might be useful for further applications in general relativity when the Einstein equations are coupled to the matter content of the Standard Model.

\bibliographystyle{plain}
\bibliography{mybib}

\end{document}